\newcommand{\N}{\mathbb{N}}
\newcommand{\Z}{\mathbb{Z}}
\newcommand{\R}{\mathbb{R}}
\newcommand{\z}{\mathcal{Z}}
\newcommand{\Row}{\mathcal{R}}
\newcommand{\Col}{\mathcal{C}}
\newcommand{\T}{\mathcal{T}}
\newcommand{\te}{\mathcal{T}_{\text{\rm en}}}
\newcommand{\M}{\mu}
\newcommand{\me}{\mu_{\text{\rm en}}}
\newcommand{\ten}{\tau_{\text{\rm en}}}
\newcommand{\Ne}{\mathcal{N}}
\newcommand{\col}{\text{\tt col}}
\newcommand{\row}{\text{\tt row}}
\newcommand{\vr}{\vec{r}}
\newcommand{\vc}{\vec{c}}
\newcommand{\bZ}{\Z}
\newcommand{\cT}{\mathcal{T}}
\newcommand{\cM}{\mathcal{M}}
\newcommand{\cA}{\mathcal{A}}
\newcommand{\Aen}{\mathcal{A}_{\text{\rm en}}}
\newcommand{\Mth}{\mu_{\text{\rm th}}}
\newcommand{\Men}{\mu_{\text{\rm en}}}
\newcommand{\cR}{\mathcal{R}}
\newcommand{\cC}{\mathcal{C}}
\newtheorem{theorem}{Theorem}[section]
\newtheorem{lemma}[theorem]{Lemma}
\newtheorem{proposition}[theorem]{Proposition}
\newtheorem{corollary}[theorem]{Corollary}
\theoremstyle{definition}
\begin{document}
\thispagestyle{empty}

\begin{center}\Large
{\bf Maximal spanning time for neighborhood growth on the Hamming plane}\footnote{Version 1, \today}
\end{center}
\begin{center}
{\sc Janko Gravner}\\
{\rm Mathematics Department}\\
{\rm University of California}\\
{\rm Davis, CA 95616, USA}\\
{\rm \tt gravner{@}math.ucdavis.edu}
\end{center}
\begin{center}
{\sc J.E. Paguyo}\\
{\rm Mathematics Department}\\
{\rm University of California}\\
{\rm Davis, CA 95616, USA}\\
{\rm \tt jepaguyo@ucdavis.edu}
\end{center} 
\begin{center}
{\sc Erik Slivken}\\
{\rm Mathematics Department}\\
{\rm University of California}\\
{\rm Davis, CA 95616, USA}\\
{\rm \tt erikslivken{@}math.ucdavis.edu}
\end{center} 

%%% Abstract %%%

\begin{abstract}
\vspace*{-0.25em}
We consider a long-range growth dynamics on the two-dimensional integer lattice, initialized by a finite set of occupied points. 
Subsequently, a site $x$ becomes occupied if the pair consisting of the counts of occupied sites along the entire horizontal and vertical lines through $x$ lies outside a fixed Young diagram $\z$. 
We study the extremal quantity $\M(\z)$, the maximal finite time at which the lattice is fully occupied. 
We give an upper bound on $\M(\z)$ that is linear in the area of the bounding rectangle of $\z$, and a lower bound $\sqrt{s-1}$, where $s$ is the side length of the largest square contained in $\z$. 
We give more precise results for a restricted family of initial sets, and for a simplified version of the dynamics. 
\end{abstract}

\let\thefootnote\relax\footnote{\small {\it AMS 2000 subject classification\/}. 05D99}
\let\thefootnote\relax\footnote{\small {\it Key words and phrases\/}.
Hamming plane, growth dynamics, spanning time, Young diagram.}

\vspace*{-1cm}

%%% Introduction %%%

\section{Introduction}

The {\em Hamming plane} 
is the Cartesian product of two complete graphs on $\Z_+$ and so it has vertex set $\Z_+^2$ with an edge between any pair of points that differ in a single coordinate. We refer to the points in $\Z_+^2$ as {\em sites}. 
Investigation of percolation and growth models on the Hamming plane and related highly connected graphs is a recent development \cite{Siv, GHPS, BBLN, Sli}, 
and this paper addresses an extremal quantity associated with a growth process introduced in \cite{GSS}.  

We keep the terminology and notation from \cite{GSS}. For $a,b \in \N$, we let $R_{a,b} = ([0,a-1] \times [0,b-1]) \cap \Z_+^2$ be the discrete $a \times b$ rectangle. 
A union $\z = \bigcup_{(a,b) \in \mathcal{I}} R_{a,b}$ of rectangles over some finite set $\mathcal{I} \subseteq \N^2$ is called a {\em zero-set}. Note that it is possible that $\z = \emptyset$. 
Also note that we restrict our consideration to finite zero-sets. 

For a site $x \in \Z_+^2$, we denote by $L^h(x)\subset \Z_+^2$ and $L^v(x)\subset \Z_+^2$ the horizontal and vertical lines through $x$, respectively. The {\em neighborhood} of $x$ then is $\Ne(x) = L^h(x) \cup L^v(x)$. 
The row and column counts of $x$ in a set $A\subset \Z_+^2$ are given by 
\begin{align*}
\row(x,A) = | L^h(x) \cap A| \text{ and } \col(x,A) = \left| L^v(x) \cap A \right|.
\end{align*}

A zero-set $\z$ determines a {\em neighborhood growth transformation} $\T : 2^{\Z_+^2} \to 2^{\Z_+^2}$ as follows. Fix $A \subseteq \Z_+^2$. If $x \in A$, then $x \in \T(A)$. If $x \notin A$, then $x \in \T(A)$ 
if and only if the pair of row and column counts of $x$ lies outside the zero-set, i.e., $(\row(x,A), \col(x,A)) \notin \z$. The {\em neighborhood growth dynamics} is given by the discrete time trajectory obtained by iteration of $\cT$: 
$A_t = \T^t(A)$, for $t \geq 0$. We call sites in $A_t$ {\em occupied} and sites in $A_t^c$ {\em empty}. See Figure \ref{FigNbhdGrowth} for an example of neighborhood growth dynamics.

%%%%%%%%%%%%%%

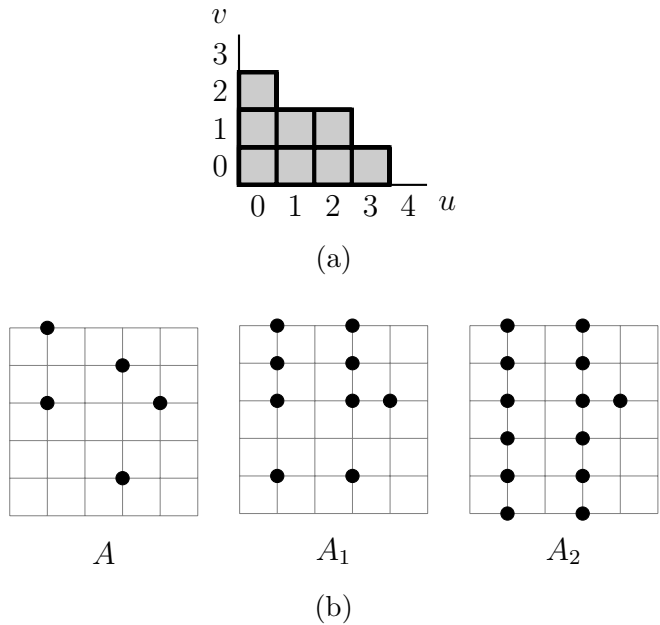
\begin{figure}[!t]
\centering

%\captionsetup{justification=centering,margin=2cm}
\begin{subfigure}{1\linewidth}
\centering
\begin{tikzpicture}[scale=0.50]
   	% Draw axes
	\draw[thick,-] (0,0) -- (5,0) node[anchor=north west] {$u$};
	\draw[thick,-] (0,0) -- (0,4) node[anchor=south east] {$v$};
	%label x axis
    	\draw (0.5 cm, 0pt) node (0.5 cm,0pt) [anchor=north] {$0$};
	\draw (1.5 cm, 0pt) node (1.5 cm,0pt) [anchor=north] {$1$};
    	\draw (2.5 cm, 0pt) node (2.5 cm,0pt) [anchor=north] {$2$};
    	\draw (3.5 cm, 0pt) node (3.5 cm,0pt) [anchor=north] {$3$};
	\draw (4.5 cm, 0pt) node (4.5 cm,0pt) [anchor=north] {$4$};
	%label y axis
	\draw (0pt,0.5 cm) -- (0pt,0.5 cm) node [anchor=east] {$0$};
	\draw (0pt,1.5 cm) -- (0pt,1.5 cm) node [anchor=east] {$1$};
	\draw (0pt,2.5 cm) -- (0pt,2.5 cm) node [anchor=east] {$2$};
	\draw (0pt,3.5 cm) -- (0pt,3.5 cm) node [anchor=east] {$3$};
	%Draw zero set
	\draw [ultra thick, draw=black, fill=black!20!white] (0,0) grid (1,3) rectangle (0,0);
	\draw [ultra thick, draw=black, fill=black!20!white] (0,0) grid (3,2) rectangle (0,0);
	\draw [ultra thick, draw=black, fill=black!20!white] (0,0) grid (4,1) rectangle (0,0);
\end{tikzpicture}
\caption{} 
\vspace{0.5cm}
\end{subfigure}
\begin{subfigure}{1\linewidth}
\centering
\captionsetup{justification=centering,margin=3cm}
\begin{tikzpicture}[scale=0.50]
	\draw[step=1cm,gray,very thin] (0,0) grid (5,5);
	\filldraw[black] (1,3) circle (5pt);
	\filldraw[black] (1,5) circle (5pt);
	\filldraw[black] (3,1) circle (5pt);
	\filldraw[black] (3,4) circle (5pt);
	\filldraw[black] (4,3) circle (5pt);
	\node at (2.5, -1) {$A$};
\end{tikzpicture}
\quad
\begin{tikzpicture}[scale=0.50]
	\draw[step=1cm,gray,very thin] (0,0) grid (5,5);
	\filldraw[black] (1,3) circle (5pt);
	\filldraw[black] (1,5) circle (5pt);
	\filldraw[black] (3,1) circle (5pt);
	\filldraw[black] (3,4) circle (5pt);
	\filldraw[black] (4,3) circle (5pt);
	\filldraw[black] (1,1) circle (5pt);
	\filldraw[black] (3,3) circle (5pt);
	\filldraw[black] (1,4) circle (5pt);
	\filldraw[black] (3,5) circle (5pt);
	\node at (2.5, -1) {$A_1$};
\end{tikzpicture}
\quad
\begin{tikzpicture}[scale=0.50]
	\draw[step=1cm,gray,very thin] (0,0) grid (5,5);
	\foreach \x in {0,1,2,3,4,5}
		\filldraw[black] (1,\x) circle (5pt);
	\foreach \y in {0,1,2,3,4,5}
		\filldraw[black] (3,\y) circle (5pt);
	\filldraw[black] (4,3) circle (5pt);
	\node at (2.5, -1) {$A_2$};
\end{tikzpicture}
\caption{}
\end{subfigure}
\caption{An example of neighborhood growth dynamics with zero-set given in (a) and the initial set in the leftmost panel of (b). The next two panels of (b) depict the subsequent two iterations. 
Observe that $\T^3(A) = \T^2(A)$, thus $A$ does not span.} \label{FigNbhdGrowth}
%\vspace{-0.5cm}
\end{figure}
%%%%%%%%%%%%%%

The simplest example, {\em line growth}, introduced as {\em line percolation} in \cite{BBLN}, is given by a rectangular zero-set $\z = R_{a,b}$ for some $a,b \in \N$. 
Another special case, perhaps the most important one, is {\em threshold growth}, which is determined by an integer {\em threshold} $\theta \geq 1$. This natural 
growth rule is defined on an arbitrary graph as follows: a site $x$ becomes occupied when the number of already occupied sites among its neighbors is at least $\theta$.
This dynamics was first introduced on trees in \cite{CLR} and is typically called {\em bootstrap percolation}. 
The most common setting, with many deep and surprising results, is a graph of the form $[k]^\ell$, a Cartesian product of path graphs of $k$ points, and thus with standard nearest neighbor lattice connectivity \cite{AL,GG,Hol,BB,GHM,BBDM}.
On the Hamming plane, threshold growth is given by the triangular zero-set $\z = \{(u,v) : u + v \leq \theta - 1\}$; see \cite{GHPS} and \cite{GSS} for further background.  

Define the set of eventually occupied sites by $A_{\infty} = \T^{\infty}(A) = \bigcup_{t \geq 0} A_t$. The set $A$ {\em spans} if $A_{\infty} = \bZ_+^2$. 
For a fixed zero-set $\z$, we let $\mathcal{A} = \mathcal{A}(\z)$ denote the collection of all finite spanning subsets of $\Z_+^2$. It follows from Theorem 2.8 in \cite{GSS} that, for any $A \in \mathcal{A}$, the {\em spanning time} 
\begin{align*}
\tau(\z,A) = \min\{t\in \N: \T^t(A) = \bZ_+^2\}
\end{align*}
is finite. Our main focus of attention is the {\em maximal spanning time}, the extremal quantity defined by  
\begin{align*}
\M(\z) = \sup\{\tau(\z,A): A \in \mathcal{A} \}.
\end{align*}
Theorem 2.8 in \cite{GSS} shows that $\M(\z) < \infty$ by providing a very large upper bound (essentially the product of lengths of all rows of $\z$, multiplied by their number). 
One of our results is a substantial improvement of that bound (see Theorem \ref{generalUB} below). 
Before we give further definitions and state our results, we give a brief review of related results in the literature. 

Arguably, the topic that most closely resembles ours is the following classic problem on matrix powers. Let $\cM$ be the set of all primitive non-negative $n \times n$ matrices. 
Then let $h=h(n)$ be the smallest power that makes all elements of the power $A^h$ nonzero for every $A\in \cM$. 
See \cite{HV} for the solution of this problem, and related results for analogous extremal quantities obtained by replacing $\cM$ by some natural subsets of $\cM$.

Extremal quantities in growth models have been of substantial interest. Perhaps the most natural one is the smallest cardinality of a spanning set. 
It is a famous folk theorem that this quantity equals exactly $n$ for the bootstrap percolation on $[n]^2$ with $\theta=2$.   
For bootstrap percolation on $[n]^d$ with $\theta = 2$, the smallest spanning sets have size $\lceil d(n-1)/2\rceil + 1$ \cite{BBM}. 
Not much was known for thresholds $\theta \ge 3$ in such lattice setting until a recent breakthrough \cite{MN}. 
Smallest spanning sets have also been studied for bootstrap percolation on trees \cite{Rie} and on hypergraphs \cite{BBMR}.
For Hamming graphs, \cite{BBLN} determines the size of the smallest spanning sets for line growth, while \cite{GSS} gives bounds for general neighborhood growth. 

Maximal spanning time results are comparatively scarce. In \cite{BP2}, it is shown that the maximal spanning time for bootstrap percolation on $[n]^2$ with $\theta=2$ is $13n^2/18 + O(n)$ (see also \cite{BP1}). 
For bootstrap percolation on the hypercube $[2]^n$, the maximal spanning time is $\lfloor n^2/3\rfloor$ when $\theta=2$ \cite{Prz} and $n^{-1+o(1)}2^n$ when $\theta \geq 3$ \cite{Har}. 
A related clique-completion process is studied from this perspective in \cite{BPRS}. It appears that in general the maximal spanning time is considerably more complex than the smallest size of a spanning set. 
In our case, a major difficulty is non-monotonicity of $\mu$: if $\z \subseteq \z'$, then $\cA(\z') \subseteq \cA(\z)$ but $\tau(A,\z) \leq \tau(A,\z')$ for all $A \in \cA(\z')$, 
thus it is not clear how $\mu(\z)$ and $\mu(\z')$ compare.
As announced, our first result is an improved upper bound from \cite{GSS} on the maximal spanning time. 

\begin{theorem} \label{generalUB} 
For any $m,n \in \N$ and zero-set $\z \subseteq R_{m,n}$,
\begin{align*}
\M(\z)\leq 2mn+5.
\end{align*}
\end{theorem}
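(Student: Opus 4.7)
The plan is to exploit a saturation mechanism. Define $R_t = \{i : \text{row } i \subseteq A_t\}$ and $C_t = \{j : \text{column } j \subseteq A_t\}$; both are monotone non-decreasing in $t$. Once $|R_t| \geq n$, every empty site $(i,j)$ satisfies $c_t(j) \geq |R_t| \geq n$, so $(r_t(i), c_t(j)) \notin \z$ because $\z \subseteq R_{m,n}$, forcing $A_{t+1} = \Z_+^2$; the symmetric statement holds when $|C_t| \geq m$. Thus, letting $\tau^* = \min\{t : |R_t| \geq n \text{ or } |C_t| \geq m\}$, we have $\tau(\z,A) \leq \tau^* + 1$, and it suffices to prove $\tau^* \leq 2mn + 4$ for any spanning $A$.

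Before $\tau^*$, $|R_t| \leq n-1$ and $|C_t| \leq m-1$, so at most $m+n-1$ ``mass events'' (steps at which $|R_t|+|C_t|$ strictly increases) can occur up to and including $\tau^*$. The core task is to control the ``quiet'' intervals between mass events, in which $R_t,C_t$ are fixed and only individual sites are added. I would introduce a potential of the form $\Phi_t = m\cdot|R_t| + n\cdot|C_t| + P_t$, where $P_t \geq 0$ is a progress quantity that strictly increases on every quiet step and stays uniformly bounded. Since $m|R_t|+n|C_t|$ can grow by at most $2mn - \min(m,n)$ up to $\tau^*$ (at $\tau^*$ exactly one of $|R|,|C|$ reaches its threshold while the other is at most $m-1$ or $n-1$), a uniform bound $P_t \leq \min(m,n) + 4$ together with $\Phi_{t+1} \geq \Phi_t + 1$ for all $t < \tau^*$ would give $\tau^* \leq \Phi_{\tau^*} \leq 2mn + 4$.

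The main obstacle is constructing a workable $P_t$. The naive choice $P_t = M_t^r + M_t^c$ with $M_t^r = \max_{i \notin R_t} r_t(i)$ and $M_t^c = \max_{j \notin C_t} c_t(j)$ is bounded by $m+n-2$, but need not strictly increase during a quiet step that adds a single site to a non-maximal row. A more refined $P_t$ would have to track the full sorted profile of $(r_t(i))_i$ and $(c_t(j))_j$ inside the complementary subgrid $S_t = (\N \setminus R_t) \times (\N \setminus C_t)$, using the fact that the sub-dynamics restricted to $S_t$ is itself a neighborhood growth with effective zero-set $\z^\ast = \{(u,v) : (u + |C_t|, v + |R_t|) \in \z\} \subseteq R_{m-|C_t|, n-|R_t|}$, which is strictly smaller after each mass event. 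If a clean direct potential is elusive, I would fall back on an induction on $mn$: bound the time to the next mass event by the maximal spanning time of this smaller sub-dynamics, and telescope over the at most $m+n-1$ mass events to recover $2mn + O(1)$, absorbing the constants into the final $+5$.
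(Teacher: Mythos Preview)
Your proposal has a genuine gap that neither branch closes. The potential-function route is admittedly incomplete: you need a $P_t$ that strictly increases on \emph{every} quiet step yet stays uniformly bounded by $O(1)$, and you have not constructed one. The fallback induction also fails to deliver $2mn+O(1)$. Bounding the time to the next mass event by $\mu(\z^\ast)\le 2m'n'+5$ (with $m'=m-|C_t|$, $n'=n-|R_t|$) and summing over the $O(m+n)$ mass events gives $\sum_k 2m_kn_k + O(m+n)$. Since a single mass event only reduces $m'n'$ by $m'$ or by $n'$, this sum is of order $mn(m+n)$, not $mn$; there is no telescoping identity here. To make the induction work you would need a direct bound on $\mu_{\text{line}}(\z)$ (the time to cover a first line) that is $O(1)$ or at worst $O(m+n)$, and you have not argued any such bound for general $\z$.

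The paper's argument supplies exactly the missing idea, and it is quite different from a potential or an induction on $mn$. The key lemma is a ``crosspoint'' observation: if $x,y\in A_t\setminus A$ are non-neighbors, then of the two intersection points $w\in L^h(x)\cap L^v(y)$ and $z\in L^v(x)\cap L^h(y)$, at least one lies in $A_t$. From this one controls, for a fixed late time $k$, the \emph{total} set $B$ of sites added during all quiet steps combined (not the length of any single quiet interval). Since no line was saturated when these sites arrived, each row carries fewer than $m$ and each column fewer than $n$ sites of $B$; the crosspoint lemma then lets one split $B=B_h\cup B_v$ around any $z\in B$ so that $|\pi_x(B_h)|$ and $|\pi_y(B_v)|$ are bounded by occupied counts on $L^h(z)$ and $L^v(z)$. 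A counting contradiction forces, for $k=2mn+3$, either $\ge m$ columns or $\ge n$ rows to be covered by time $k+1$, finishing at $k+2=2mn+5$. The structural control of $B$ as a whole, via the crosspoint lemma, is what replaces your missing per-step potential.
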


We are able to obtain a better upper bound for a special class of initial sets, which will also provide a lower bound for $\mu(\z)$. 

A finite set $A \subseteq \Z_+^2$ is {\em thin} if, for every site $x \in A$, either $\row(x,A) = 1$ or $\col(x,A) = 1$. That is, any point $x\in A$ has no other points of $A$ either on the horizontal line or on the vertical line through $x$. 
We define $\cA_{\text{\rm th}}$ to be the set of all spanning thin sets, and let
$$
\Mth(\z)=\max\{\tau(A,\z): A\in \cA_{\text{\rm th}}\}.
$$
Arguably, thin sets are the simplest general family of initial sets and are for this reason used in \cite{GHPS,GSS} and in Section \ref{specialcases}. 
In certain circumstances, thin set constructions are close to optimal \cite{GSS}; in the present context, the extent to which $\M(\z)$ and $\Mth(\z)$ are comparable remains unclear (see open problem 3 in Section \ref{openquestions}).   
The utility of thin sets in part comes from their connection to a simplified growth dynamics, which we now introduce. 

The row and column {\em enhancements} $\vec{r} = (r_0,r_1,\ldots) \in \Z_+^\infty$ and $\vec{c} = (c_0,c_1,\ldots) \in \Z_+^\infty$ are two weakly decreasing sequences of non-negative integers which increase row and column counts by fixed values. 
To be precise, the {\em enhanced neighborhood growth dynamics} \cite{GSS} is given by the triple $(\z,\vec{r},\vec{c})$, which defines a growth transformation $\T_{\text{\rm en}}: 2^{\Z_+^2} \to 2^{\Z_+^2}$ as follows:
\begin{align*}
\T_{\text{\rm en}}(A) = A \cup \{(i,j) \in \bZ_+^2 : (\row((i,j),A) + r_j, \col((i,j),A) + c_i) \notin \z \}.
\end{align*}
By default, we initialize the enhanced growth by the empty set, and we say that the pair of enhancements ($\vr, \vc)$ {\em spans} for $\z$ if $\bigcup_{t \geq 0} \T_{\text{\rm en}}^t(\emptyset)=\bZ_+^2$. 
We let $\mathcal{A}_{\text{\rm en}}$ be the set of all pairs of enhancements $(\vr,\vc)$ that have finite support and span for $\z$. Next, we introduce the enhanced spanning time
\begin{align*}
\ten(\z,\vr,\vc) &= \inf\{t \in \N : \te^t(\emptyset) = \bZ_+^2 \}, 
\end{align*}
and finally define the corresponding maximal quantity
\begin{align*}
\me(\z) = \max\{ \ten(\z,\vr,\vc) : (\vr,\vc) \in \mathcal{A}_{en} \}.
\end{align*}
We next state a comparison result between $\Mth$ and $\Men$. 

\begin{theorem} \label{thin-enhanced} 
For all zero-sets $\z$, 
\begin{align*}
\Men(\z)-2 \leq \Mth(\z)\le 2\Men(\z).
\end{align*}
\end{theorem}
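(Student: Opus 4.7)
The plan is to prove the two inequalities by establishing explicit correspondences between thin spanning sets and enhancement pairs, and by a careful comparison of the two dynamics on a common lattice. Throughout set $A_t := \cT^t(A)$ and $E_t := \te^t(\emptyset)$.

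For the lower bound $\Men(\z) - 2 \le \Mth(\z)$, I start with an enhancement $(\vr,\vc) \in \Aen$ that realizes $\Men(\z)$. From this I build a thin spanning set $A$ that simulates the enhanced dynamics. Let $M$ be the largest index at which either $\vr$ or $\vc$ is nonzero, and pick $N$ larger than both $M$ and the side of a bounding rectangle of $\z$. For each $j$ with $r_j>0$, place $r_j$ row-helper points in row $j$ at $r_j$ distinct reserved columns in the strip $\{N,N+1,\ldots\}$, with the reserved columns for distinct values of $j$ disjoint so that each reserved column contains exactly one row-helper; symmetrically place column-helpers using a disjoint set of reserved rows. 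Each helper is then a column-singleton or row-singleton, so $A$ is thin. Restricted to the inner region $[0,N)^2$, the initial row and column counts at any $(i,j)$ are exactly $r_j$ and $c_i$, and a short induction shows the inner dynamics of $A$ match the enhanced dynamics step for step. Once the inner region is filled, the helper-strip rows and columns acquire unboundedly many occupied cells, and the far corner outside both strips fills in one further step; bookkeeping yields a total extra delay of at most $2$, i.e., $\tau(\z,A) \ge \ten(\z,\vr,\vc) - 2$.

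For the upper bound $\Mth(\z) \le 2\Men(\z)$, I start with thin $A$ realizing $\Mth(\z)$, permute rows and columns so that the row and column counts of $A$ are weakly decreasing (the dynamics is invariant under such permutations), and set $r_j:=|\{i:(i,j)\in A\}|$ and $c_i:=|\{j:(i,j)\in A\}|$. The key monotone comparison is $A_t\subseteq A\cup E_t$ for all $t$, proved by induction: since $|(A\cup E_t)\cap L^h(x)|\le |A\cap L^h(x)|+|E_t\cap L^h(x)|=r_j+\row(x,E_t)$, downward closure of $\z$ implies any new thin growth at $(i,j)$ forces enhanced growth at the next step. For the converse, needed to upper-bound $\tau(\z,A)$ by a multiple of $\ten(\z,\vr,\vc)$, I would establish $E_t\subseteq A_{2t}$ by a two-step-at-a-time induction: in the first of each pair of thin steps the initial $A$ boosts row and column counts by $r_j,c_i$, effectively simulating one enhanced step, while in the second the newly grown cells of $E_t$ take their place in the thin dynamics. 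Combined, $\tau(\z,A) \le 2\ten(\z,\vr,\vc) \le 2\Men(\z)$.

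The main obstacle is the overlap $A\cap E_t$: when an $A$-point $(i,j)$ satisfies $(r_j,c_i)\notin \z$, inclusion-exclusion loses a term and the thin row count can fall strictly below the enhanced effective row count $r_j+\row(x,E_t)$. A naive one-step induction of the form $E_t\subseteq A_{t+c}$ then fails. Using a two-step comparison instead is precisely what allows the factor $2$ in the upper bound, and carefully controlling the overlap throughout this induction (together with the base case, where $E_1$ may already contain much of $A$) is the crux.
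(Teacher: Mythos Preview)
Your proposal has genuine gaps in both directions, and the paper's argument is structured quite differently.

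\textbf{Upper bound.} You correctly identify the overlap $A\cap E_t$ as the obstruction to a one-step comparison, but then the whole argument hinges on the unproved claim $E_t\subseteq A_{2t}$. Your two-step heuristic (``the initial $A$ boosts counts by $r_j,c_i$ in the first step; newly grown cells take their place in the second'') does not survive the overlap either: once a point of $A$ lies in $E_t$, it is double-counted in $r_j+\row(x,E_t)$ but counted once in $\row(x,A_t)$, and this deficit can propagate. You never establish that $(\vr,\vc)$ spans, which is also needed. The paper sidesteps the overlap entirely by putting $A$ in the \emph{standard arrangement} and comparing only on the region $X$ of sites strictly below $A$. Since $A\cap X=\emptyset$ by construction, the regular and enhanced dynamics agree \emph{exactly} on $X$ (Lemma~2.8 supplies the monotonicity needed), so $X$ is covered in at most $\Men(\z)$ steps. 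Then $A\cup X$ is a spanning Young diagram, and a separate short lemma shows any spanning Young diagram finishes in at most $\Men(\z)$ further steps. The factor $2$ thus comes from two distinct phases, not from a two-step simulation.

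\textbf{Lower bound.} Your claim that ``a short induction shows the inner dynamics of $A$ match the enhanced dynamics step for step'' is not justified and is in general false as stated. A reserved column $k$ carries exactly one helper, so every strip site $(k,j)$ with $j<N$ starts with column count $1$; whenever $(r_j,1)\notin\z$, these sites become occupied at time $1$ and inflate the row count of row $j$ for all inner sites. This extra growth can cascade and make the thin dynamics on $[0,N)^2$ strictly faster than the enhanced dynamics, which is the wrong direction for your inequality $\tau(\z,A)\ge\ten-2$. (Your phrase ``extra delay of at most $2$'' already reads like an upper bound, not the lower bound you need.) The paper avoids this by first passing from $\z$ to $\z^{\swarrow 1}$: one proves $\Men(\z)\le\Men(\z^{\swarrow 1})+2$ directly, and separately $\Men(\z^{\swarrow 1})\le\Mth(\z)$. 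For the latter, one takes enhancements $(\vr,\vc)$ spanning for $\z^{\swarrow 1}$, adds $1$ to every coordinate (so all entries are $\ge 1$), and realizes the resulting counts by a thin set in standard arrangement with enough isolated points to push $A$ entirely outside a large square $S\supseteq\z$. On $S$ the regular and enhanced dynamics then coincide exactly; the standard arrangement and Lemma~2.8 are again what make this clean comparison possible.

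In short, both of your arguments founder on interference between $A$ and the growing set, and the paper's key device in each case is the standard arrangement together with a region (the square $S$, or the set $X$ below $A$) that is disjoint from $A$ and on which the two dynamics provably coincide.
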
 

For a zero-set $\z$, we let $s=s(\z)$ to be the side of the largest square included in $\z$, that is, the integer $s\ge 0$ such that $R_{s,s}\subseteq\z$ but $R_{s+1,s+1}\not\subseteq \z$. 
The upper bound we obtain for $\Men$ and $\Mth$ is linear in $s$, and is in this sense the best possible.   

\begin{theorem} \label{enhancedUB}
For any zero-set $\z$, 
\begin{align*}
&\me(\z) \leq 4s+1,\\
&\Mth (\z) \leq 8s+2.
\end{align*}
Moreover,  
\begin{align*}
&\sup_{\z\ne\emptyset}\Men(\z)/s(\z)\in (0,\infty),\\
&\sup_{\z\ne\emptyset}\Mth(\z)/s(\z)\in (0,\infty). 
\end{align*}
\end{theorem}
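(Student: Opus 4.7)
My plan is to first prove the upper bound $\me(\z) \le 4s+1$; the bound $\Mth(\z) \le 8s+2$ will then follow immediately from Theorem \ref{thin-enhanced}, since $\Mth \le 2\me$. For the main bound, I would fix $(\vr,\vc) \in \Aen$ and consider the trajectory $A_t = \te^t(\emptyset)$. Since $R_{s+1,s+1} \not\subseteq \z$, some $(a,b)$ with $a,b \le s$ lies outside $\z$ and has $a = s$ or $b = s$; by the $u \leftrightarrow v$ symmetry of the dynamics, I may assume $(s,b) \notin \z$ with $b \le s$. The Young-diagram property of $\z$ then gives $(u,v) \notin \z$ for all $u \ge s$, $v \ge b$.

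This setup yields a trigger lemma: once $A_t$ contains at least $b$ fully occupied rows and at least $s$ fully occupied columns, every $(i,j) \notin A_t$ satisfies $\row((i,j), A_t) \ge s$ (one contribution per full column) and $\col((i,j), A_t) \ge b$ (one per full row), so $(\row + r_j, \col + c_i) \notin \z$ and $A_{t+1} = \Z_+^2$. The crux is then a progress lemma: the numbers of fully occupied rows and columns must grow fast enough that the trigger state of at least $b$ rows and $s$ columns (totaling at most $2s$) is reached in at most $4s$ steps, yielding $\me(\z) \le 4s+1$.

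For the lower bounds, I would take the line-growth zero-set $\z_n = R_{n,n}$ (so $s(\z_n) = n$) with enhancements $r_j = c_j = \max(n-j,0)$. A direct induction on $t$ should show that for $1 \le t \le n$, $A_t = \{(i,j) \in \Z_+^2 : i \le t-1 \text{ or } j \le t-1\}$, and at time $n+1$ the complementary quadrant $\{(i,j) : i,j \ge n\}$ fills in at once since the relevant effective-count pair at such sites is $(n,n) \notin R_{n,n}$. Thus $\ten(\z_n,\vr,\vc) = n+1$, giving $\me(\z_n)/s(\z_n) \ge 1 + 1/n$ and hence $\sup \me/s \ge 1 > 0$. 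The matching $\Mth(\z_n) \ge n - 1$ then follows from Theorem \ref{thin-enhanced}.

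The main obstacle will be the progress lemma. The non-monotonicity of $\M$ flagged in the introduction and the coupling between row- and column-growth make it delicate: an adversarial enhancement structure can cause full-row growth to stall while full-column growth proceeds, or vice versa, and the factor $4$ rather than $2$ in the bound presumably reflects this interplay. A promising approach is to identify a scalar monovariant capturing partial progress---for example, a truncated sum of effective row and column counts such as $\sum_j \min(\rho_j+r_j,s) + \sum_i \min(\gamma_i+c_i,s)$, with $\rho_j,\gamma_i$ denoting the row and column counts in $A_t$---and show it strictly increases by a specified amount at every nontrivial step until the trigger condition is attained.
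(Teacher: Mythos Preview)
Your trigger lemma and your lower-bound construction with $\z_n=R_{n,n}$ are both fine, but the progress lemma is a genuine gap, not a detail. The monovariant you suggest, $\sum_j \min(\rho_j+r_j,s)+\sum_i\min(\gamma_i+c_i,s)$, is an infinite sum (every row eventually has $\rho_j\ge 1$ once a single column is covered), and even after truncation there is no evident reason it must increase by a fixed amount per step: an adversary can choose enhancements with many distinct levels so that each step occupies only a single new concave corner, and nothing in your setup bounds the number of such levels. Without that bound the argument stalls exactly where you say it does.

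The paper avoids this entirely by a reduction you do not use. Via the infimal difference (Lemma~\ref{containment}), any spanning pair $(\Row,\Col)$ may be replaced by a pair with $\Row\subseteq\z$ and $\Col\subseteq\z$ whose spanning time is at least as large. Now a structural count kicks in: $\z$ has at most $s$ rows longer than $s$ and the remaining rows have length in $\{1,\dots,s\}$, so $\z$ has at most $2s$ distinct nonzero row lengths, and likewise for columns. Hence $\Row$ and $\Col$ each have at most $2s$ distinct nonzero levels, and Corollary~\ref{partitiontimebound} immediately gives $\ten\le 2s+2s+1=4s+1$. No monovariant or step-by-step tracking of full lines is needed. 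For the suprema the paper simply invokes Proposition~\ref{rectmn}, which already gives $\Mth(R_{n,n})=2n-1$; combined with Theorem~\ref{thin-enhanced} this yields the positivity for $\Men$ as well.
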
 

Finally, we give the lower bounds on the maximal spanning times. We do not know whether these are in any sense optimal (see open problem 2 in Section \ref{openquestions}).

\begin{theorem}\label{generalLB}
For any zero-set $\z \neq \emptyset$,   
\begin{align*}
\me(\z) \geq s^{1/2}
\end{align*}
and 
\begin{align*}
\M(\z) \geq \Mth(\z)\ge (s-1)^{1/2}.
\end{align*}
\end{theorem}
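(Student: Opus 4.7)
The plan is to prove each lower bound by exhibiting an explicit slow-spanning configuration. Set $T = \lfloor \sqrt{s} \rfloor$ (resp.\ $T = \lfloor \sqrt{s-1} \rfloor$ for the thin-set bound). The two constructions parallel each other and rest on the same structural observation: since $\z$ is downward-closed (it is a Young diagram) and $R_{s,s} \subseteq \z$, a site $(i,j)$ is added at time $t+1$ only when $(\row + r_j, \col + c_i) \in \z^c$, which in particular forces at least one of these enhanced coordinates to reach $s$. To force slow spanning we choose the enhancements so that a new escape threshold is first crossed at each successive time step.

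For $\me(\z) \geq \sqrt{s}$, I will build weakly decreasing enhancements $(\vr, \vc) \in \Aen$ in which $r_j = c_i$ takes roughly $T$ distinct value levels. The largest level is at least the width of the bounding rectangle of $\z$, which seeds the dynamics along row $0$ and column $0$; subsequent levels step down through carefully chosen intermediate values. By induction on $k \leq T$, the row and column counts at not-yet-covered sites have grown just enough to cross the $k$-th threshold, activating one additional layer of rows and columns; in particular the final layer (where joint row/column counts must reach $s$) cannot be crossed before time $T$. Spanning occurs at time $T$, giving $\ten(\z,\vr,\vc) \geq T \geq \sqrt{s}$.

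The inequality $\M(\z) \geq \Mth(\z)$ is immediate since every spanning thin set is a spanning set. For $\Mth(\z) \geq \sqrt{s-1}$, I will mirror the enhanced construction by choosing a thin set consisting of $\Theta(T)$ horizontal and vertical rays arranged in a staircase, no two of which share a row or column; each ray contributes the extra row or column count needed to trigger the next wave of activations. The loss of a single unit from $s$ to $s-1$ reflects the fact that a thin set cannot use the zeroth column/row as freely as an enhancement can.

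The main obstacle is verifying the cascade for an arbitrary zero-set $\z$ rather than for a simple shape such as $R_{s,s}$. Because $\z$ may extend beyond $R_{s,s}$ via long horizontal or vertical arms, a site may escape $\z$ through several distinct conditions (large row count with small column count, the symmetric possibility, or the "corner" condition that both coordinates simultaneously reach $s$). The enhancement values (resp.\ ray lengths and positions) must therefore interleave values near the bounding dimensions of $\z$ with values near $s$, so that exactly one fresh activation threshold is crossed per time step. Verification proceeds by induction on the wave index, tracking which rows and columns have been activated and using the downward closure of $\z$ to propagate the invariant that low-enhanced sites remain unoccupied until their wave arrives.
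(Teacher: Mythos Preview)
Your proposal is a plan rather than a proof, and the missing piece is exactly the step you flag as ``the main obstacle'': you never specify the ``carefully chosen intermediate values'' for the enhancements, nor verify that any single staircase-type construction produces $\lfloor\sqrt{s}\rfloor$ distinct activation waves for \emph{every} zero-set $\z$ with $R_{s,s}\subseteq\z$. The structural observation that escaping $\z$ forces one enhanced coordinate to reach $s$ is correct, but it does not by itself control the \emph{rate} at which sites escape; the shape of $\z$ outside $R_{s,s}$ can allow many layers to activate simultaneously, collapsing your cascade. Without the explicit values and an induction that survives arbitrary arms of $\z$, the argument has a genuine gap.

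The paper sidesteps this difficulty with a case analysis you do not have. It distinguishes (i) $\z$ has at least $\sqrt{s}$ distinct row lengths among rows of length $\ge s$ (then take $\Row=\z$, $\Col=\emptyset$, and distinct row lengths force distinct covering times); (ii) the symmetric column case; (iii) otherwise there is a block of $\ge\sqrt{s}$ equal-length rows and $\ge\sqrt{s}$ equal-length columns, and after trimming to $\z'$ one applies the staircase lemma (Lemma~\ref{ratslope}) with $a=b=1$ to get $\me(\z')\ge\sqrt{s}$, then pushes this back to $\z$ via Lemma~\ref{shifty}. Each case uses a different, simple construction; there is no universal one. For the thin-set bound the paper does not ``mirror the enhanced construction'' at all: it invokes the comparison $\me(\z^{\swarrow 1})\le\Mth(\z)$ (Lemma~\ref{enhancedineq}) together with $s(\z^{\swarrow 1})\ge s-1$, which is also the true source of the drop from $s$ to $s-1$, not the heuristic about the zeroth row/column that you offer.
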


The paper is organized as follows. In Section \ref{preliminaries} we introduce additional notation and definitions, and prove preliminary results.
In Section \ref{specialcases}, we address special cases of neighborhood growth. 
We prove Theorem \ref{thin-enhanced} in Section \ref{enhthincomparison}, Theorems \ref{generalUB} and \ref{enhancedUB} in Section \ref{UpperBound}, and Theorem \ref{generalLB} in Section \ref{lowerbound}. 
Finally we conclude with a selection of open questions in Section \ref{openquestions}.

%%%%%%%%%%%%%%%%%%%%%%%%%%%%%%%%%%%%%%%%%%%%%%%%%%%%%%%%%%%%%%%%%%%%%%%%%%%%%%%%%%%%%%%%%%%%%%%%%%%%%%%%%%%%%%%%%%%%%%%%

%%% Preliminaries %%%

\section{Preliminaries}\label{preliminaries}

\subsection{Notation and Terminology}

We define the partial order $\preceq$ on $\Z_+^2$ as follows. 
For two sites $z=(i,j)$ and $z'=(i',j')$, $z\preceq z'$ if and only if $i \leq i'$ and $j\leq j'$.  

A {\em Young diagram} is then a set of sites $X \subseteq \Z^2_+$ such that $z' \in X$ and $z \preceq z'$ implies $z \in X$ 
for all sites $z,z' \in \Z_+^2$. Observe that any zero-set is a Young diagram. 

For $A \subseteq \Z_+^2$, we denote the respective projections of $A$ onto the $x$-axis and $y$-axis by $\pi_x(A)$ and $\pi_y(A)$. 

Consider a vector $\vr = (r_0,r_1,\ldots) \in \Z_+^\infty$ with weakly decreasing entries. The size of $\vr$ is $|\vr| = \sum_i r_i$. The support of $\vr$ is the smallest interval $[0,N-1]$ such that $r_i = 0$ for $i \geq N$. 
We will often write $\vr$ as a finite vector, omitting its zero coordinates. 

We say a set $B \subseteq \Z_+^2$ is {\em covered} at time $t$ by either regular or enhanced growth dynamics if every site of $B$ is occupied at time $t$ by the respective dynamics.  

A {\em line} in $\Z_+^2$ is either $L^h(x)$ (also called a {\em row}) or $L^v(x)$ (also called a {\em column}) for some $x \in \Z_+^2$.

The dynamics given by the growth transformation $\T$ is sometimes called the {\em regular dynamics} when it needs to be distinguished from the enhanced version. 

\subsection{Operations with Young diagrams}\label{YDoperations}

Let $X$ be a Young diagram and $k \in \N$. We define reductions of $X$ obtained by removing the $k$ leftmost columns or $k$ bottommost rows of $X$, 
\begin{align*}
& X^{\downarrow k} = \{(u,v-k) : (u,v) \in X, v \geq k\}, \\
& X^{\leftarrow k} = \{(u-k,v) : (u,v) \in X, u \geq k\},
\end{align*}
and the diagonal shift of $X$,
\begin{align*}
X^{\swarrow k} = X^{\leftarrow k \downarrow k}.
\end{align*}

Given two Young diagrams $X,Y$, we define the {\em infimal sum} of $X$ and $Y$ by
\begin{align*}
X \boxplus Y = (X^c + Y^c)^c
\end{align*}
where $X^c = \Z_+^2 \setminus X$. See Figure \ref{FigGridSum} for an example. For a Young diagram $X$, define its closure $\overline{X} = X \cup (\Z_+^2)^c$ and  
its height function $\phi_X : \R \to \R \cup \{\infty\}$ so that $\overline{X} + [0,1]^2 = \{(x,y) \in \R^2: y \leq \phi_X(x)\}$. 
Then the terminology comes from the fact that $\phi_{X \boxplus Y} = \phi_X \square \phi_Y$, where $\square$ is the infimal convolution (see for example Section 5 of \cite{Roc}). 

The following lemma in particular establishes that the set of Young diagrams equipped with the operation $\boxplus$ is a commutative monoid. We omit the routine proof. 

%%%%%%%%%%%%

\begin{figure}[!t]
\centering

\centering
\begin{tikzpicture}[scale=0.50]
   	 % Draw axes
	\draw[thick,-] (0,0) -- (4,0) node[anchor=north west] {$u$};
	\draw[thick,-] (0,0) -- (0,4) node[anchor=south east] {$v$};
	%label x axis
    	\draw (0.5 cm, 0pt) node (0.5 cm,0pt) [anchor=north] {$0$};
	\draw (1.5 cm, 0pt) node (1.5 cm,0pt) [anchor=north] {$1$};
    	\draw (2.5 cm, 0pt) node (2.5 cm,0pt) [anchor=north] {$2$};
	\draw (3.5 cm, 0pt) node (3.5 cm,0pt) [anchor=north] {$3$};
	%label y axis
	\draw (0pt,0.5 cm) -- (0pt,0.5 cm) node [anchor=east] {$0$};
	\draw (0pt,1.5 cm) -- (0pt,1.5 cm) node [anchor=east] {$1$};
	\draw (0pt,2.5 cm) -- (0pt,2.5 cm) node [anchor=east] {$2$};
	\draw (0pt,3.5 cm) -- (0pt,3.5 cm) node [anchor=east] {$3$};
	%Draw zero set
	\draw [ultra thick, draw=black, fill=black!20!white] (0,0) grid (2,2) rectangle (0,0);
	\draw [ultra thick, draw=black, fill=black!20!white] (0,0) grid (3,1) rectangle (0,0);
	\node at (2, -2) {$X$};
\end{tikzpicture}	
\qquad
\begin{tikzpicture}[scale=0.50]
   	 % Draw axes
	\draw[thick,-] (0,0) -- (4,0) node[anchor=north west] {$u$};
	\draw[thick,-] (0,0) -- (0,4) node[anchor=south east] {$v$};
	%label x axis
    	\draw (0.5 cm, 0pt) node (0.5 cm,0pt) [anchor=north] {$0$};
	\draw (1.5 cm, 0pt) node (1.5 cm,0pt) [anchor=north] {$1$};
    	\draw (2.5 cm, 0pt) node (2.5 cm,0pt) [anchor=north] {$2$};
	\draw (3.5 cm, 0pt) node (3.5 cm,0pt) [anchor=north] {$3$};
	%label y axis
	\draw (0pt,0.5 cm) -- (0pt,0.5 cm) node [anchor=east] {$0$};
	\draw (0pt,1.5 cm) -- (0pt,1.5 cm) node [anchor=east] {$1$};
	\draw (0pt,2.5 cm) -- (0pt,2.5 cm) node [anchor=east] {$2$};
	\draw (0pt,3.5 cm) -- (0pt,3.5 cm) node [anchor=east] {$3$};
	%Draw zero set
	\draw [ultra thick, draw=black, fill=black!20!white] (0,0) grid (2,1) rectangle (0,0);
	\draw [ultra thick, draw=black, fill=black!20!white] (0,0) grid (1,3) rectangle (0,0);
	\node at (2, -2) {$Y$};
\end{tikzpicture}
\qquad
\begin{tikzpicture}[scale=0.50]
   	 % Draw axes
	\draw[thick,-] (0,0) -- (6,0) node[anchor=north west] {$u$};
	\draw[thick,-] (0,0) -- (0,6) node[anchor=south east] {$v$};
	%label x axis
    	\draw (0.5 cm, 0pt) node (0.5 cm,0pt) [anchor=north] {$0$};
	\draw (1.5 cm, 0pt) node (1.5 cm,0pt) [anchor=north] {$1$};
    	\draw (2.5 cm, 0pt) node (2.5 cm,0pt) [anchor=north] {$2$};
	\draw (3.5 cm, 0pt) node (3.5 cm,0pt) [anchor=north] {$3$};
	\draw (4.5 cm, 0pt) node (4.5 cm,0pt) [anchor=north] {$4$};
	\draw (5.5 cm, 0pt) node (5.5 cm,0pt) [anchor=north] {$5$};
	%label y axis
	\draw (0pt,0.5 cm) -- (0pt,0.5 cm) node [anchor=east] {$0$};
	\draw (0pt,1.5 cm) -- (0pt,1.5 cm) node [anchor=east] {$1$};
	\draw (0pt,2.5 cm) -- (0pt,2.5 cm) node [anchor=east] {$2$};
	\draw (0pt,3.5 cm) -- (0pt,3.5 cm) node [anchor=east] {$3$};
	\draw (0pt,4.5 cm) -- (0pt,4.5 cm) node [anchor=east] {$4$};
	\draw (0pt,5.5 cm) -- (0pt,5.5 cm) node [anchor=east] {$5$};
	%Draw zero set
	\draw [ultra thick, draw=black, fill=black!20!white] (0,0) grid (1,5) rectangle (0,0);
	\draw [ultra thick, draw=black, fill=black!20!white] (0,0) grid (2,3) rectangle (0,0);
	\draw [ultra thick, draw=black, fill=black!20!white] (0,0) grid (4,2) rectangle (0,0);
	\draw [ultra thick, draw=black, fill=black!20!white] (0,0) grid (5,1) rectangle (0,0);
	\node at (3, -2) {$X \boxplus Y$};
\end{tikzpicture}

\caption{An example of the infimal sum of two Young diagrams.}\label{FigGridSum}
%\vspace{-0.5cm}
\end{figure}
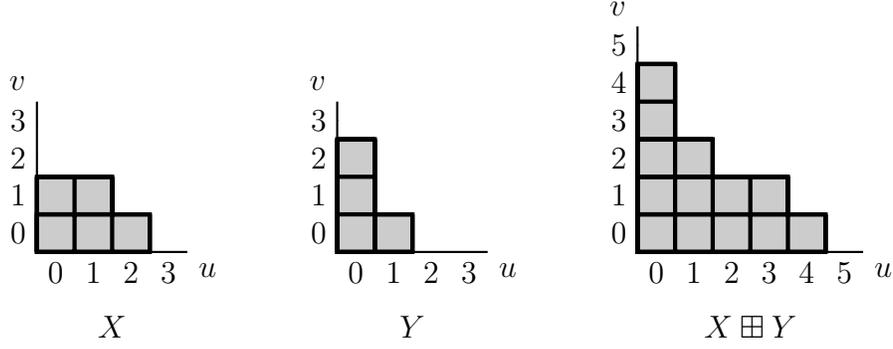

%%%%%%%%%%%%

\begin{lemma}\label{gridsumprops}
Let $X,Y,Z$ be Young diagrams. The infimal sum has the following properties:
\begin{enumerate}
	\item \label{GSYD} $X \boxplus Y$ is a Young diagram.
	\item \label{identity} $X \boxplus \emptyset = X$.
	\item \label{associate} $(X \boxplus Y) \boxplus Z = X\boxplus (Y\boxplus Z)$.
	\item \label{gridcommute} $X\boxplus Y = Y \boxplus X$.
	\item \label{gridintersect} $(X \boxplus Y) \cap (Z\boxplus Y) = (X\cap Z) \boxplus Y$.
	\item \label{gridsumcontainment} If $X \subseteq Y$, then $X \boxplus Z \subseteq Y \boxplus Z$. 
\end{enumerate}
\end{lemma}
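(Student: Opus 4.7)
The plan is to reduce each of the six properties to an elementary identity for the Minkowski sum, using the defining relation $X \boxplus Y = (X^c + Y^c)^c$ throughout, where complements are taken within $\Z_+^2$. The one observation I would isolate at the start is the following characterization: a subset $S \subseteq \Z_+^2$ is a Young diagram if and only if $S^c$ is \emph{upward-closed}, which, since $(0,0) \in \Z_+^2$, is equivalent to the equality $S^c + \Z_+^2 = S^c$. In particular, $X^c$ and $Y^c$ are both upward-closed whenever $X$ and $Y$ are Young diagrams.

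With this in hand, properties (\ref{associate}) and (\ref{gridcommute}) are immediate, as they inherit directly from associativity and commutativity of Minkowski addition, which survive complementation. Property (\ref{GSYD}) follows by checking that $(X \boxplus Y)^c = X^c + Y^c$ is upward-closed: indeed
\begin{align*}
(X^c + Y^c) + \Z_+^2 \;=\; X^c + (Y^c + \Z_+^2) \;=\; X^c + Y^c.
\end{align*}
Property (\ref{identity}) reduces to $X^c + \emptyset^c = X^c + \Z_+^2 = X^c$, again by upward-closure of $X^c$.

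For the distributive law (\ref{gridintersect}), I would take complements of both sides and apply De Morgan together with distributivity of Minkowski sum over unions:
\begin{align*}
\bigl((X \cap Z) \boxplus Y\bigr)^c \;=\; (X \cap Z)^c + Y^c \;=\; (X^c \cup Z^c) + Y^c \;=\; (X^c + Y^c) \cup (Z^c + Y^c),
\end{align*}
which, upon taking complements, becomes $(X \boxplus Y) \cap (Z \boxplus Y)$. Finally, (\ref{gridsumcontainment}) follows by reversing the inclusion under complementation: $X \subseteq Y$ gives $X^c \supseteq Y^c$, hence $X^c + Z^c \supseteq Y^c + Z^c$, which yields $X \boxplus Z \subseteq Y \boxplus Z$ after one more complement.

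There is no real obstacle here: once one records that $Y^c$ is upward-closed and that the relevant sums do not escape $\Z_+^2$, every item reduces to a one-line manipulation. The only minor bookkeeping point is to confirm that all Minkowski sums and complements are interpreted inside $\Z_+^2$; since $\Z_+^2 + \Z_+^2 \subseteq \Z_+^2$, the expressions $X^c + Y^c$ and $X^c + \Z_+^2$ automatically stay in the ambient lattice, so no boundary corrections are needed.
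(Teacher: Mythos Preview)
Your argument is correct. The paper itself omits the proof entirely, labeling it routine, so there is no approach to compare against; your reduction to Minkowski-sum identities via the upward-closure characterization of $S^c$ is exactly the kind of verification the authors had in mind.
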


Assume $Y$ and $Z$ are Young diagrams. Let $\Delta(Z,Y)$ consist of all Young diagrams $X$ such that $Z \subseteq X \boxplus Y$. 
The {\em infimal difference} of $Z$ and $Y$ is defined as
\begin{align*}
Z \boxminus Y = \bigcap_{X \in \Delta(Z,Y)} X.
\end{align*}

\begin{lemma}\label{griddiffprops}
Let $Y,Z$ be Young diagrams. The infimal difference has the following properties:
\begin{enumerate}
\item \label{GDYD} $Z \boxminus Y \in \Delta(Z,Y)$.  
\item \label{griddiff2} $Z \boxminus Y \subseteq Z$. 
\end{enumerate}
\end{lemma}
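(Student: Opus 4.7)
The strategy is to pass both claims through the characterization that, for any Young diagram $X$, the condition $X\in\Delta(Z,Y)$ is equivalent to $Z\cap (X^c+Y^c)=\emptyset$, where $X^c=\Z_+^2\setminus X$; this is immediate from the definition $X\boxplus Y=(X^c+Y^c)^c$. The plan is to prove (2) first, because this also shows that $\Delta(Z,Y)\ne\emptyset$, so $Z\boxminus Y$ is well-defined as a subset of $\Z_+^2$.

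For (2), I would verify that $Z$ itself lies in $\Delta(Z,Y)$. Suppose for contradiction that there exist $z\in Z$, $x\in Z^c$, and $y\in Y^c$ with $z=x+y$. Since $x,y\in\Z_+^2$, we have $x\preceq z$ coordinatewise, and down-closedness of the Young diagram $Z$ forces $x\in Z$, contradicting $x\in Z^c$. Hence $Z\subseteq Z\boxplus Y$, so $Z\in\Delta(Z,Y)$, and therefore $Z\boxminus Y=\bigcap_{X\in\Delta(Z,Y)}X\subseteq Z$.

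For (1), two things must be checked. First, $Z\boxminus Y$ is a Young diagram, because an arbitrary intersection of down-closed subsets of $\Z_+^2$ is again down-closed. Second, I must verify that $Z\subseteq (Z\boxminus Y)\boxplus Y$, and I would argue by contradiction: if some $z\in Z$ admits a decomposition $z=x+y$ with $x\in(Z\boxminus Y)^c$ and $y\in Y^c$, then since $x\notin\bigcap_{X\in\Delta(Z,Y)}X$, there is some $X_0\in\Delta(Z,Y)$ with $x\notin X_0$; but then $z=x+y\in X_0^c+Y^c$, contradicting $Z\cap (X_0^c+Y^c)=\emptyset$, which holds because $X_0\in\Delta(Z,Y)$. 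No real obstacle is anticipated; the whole argument is set-theoretic and uses only down-closedness of Young diagrams together with the definition of $\boxplus$. A slicker alternative would be to first extend Lemma~\ref{gridsumprops}\eqref{gridintersect} to arbitrary intersections as $(\bigcap_\alpha X_\alpha)\boxplus Y = \bigcap_\alpha (X_\alpha\boxplus Y)$ and then read off (1) directly, but the contradiction above is shorter.
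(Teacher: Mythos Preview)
Your proof is correct and follows essentially the same plan as the paper: both arguments establish (2) by showing $Z\in\Delta(Z,Y)$ (you simply spell out the one-line down-closedness check that the paper leaves implicit), and both establish (1) by first noting that intersections of Young diagrams are Young diagrams. The only real difference is in the second half of (1): the paper invokes Lemma~\ref{gridsumprops}\eqref{gridintersect} (implicitly extended from binary to arbitrary intersections) to conclude $\bigl(\bigcap_{X\in\Delta(Z,Y)}X\bigr)\boxplus Y\supseteq Z$, whereas you unpack the definition of $\boxplus$ and argue directly by contradiction. Your route is slightly more self-contained, since it does not rely on the unstated infinite-intersection version of \eqref{gridintersect}; you even identify that alternative at the end of your write-up.
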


\begin{proof}
Property \eqref{GDYD} holds since an intersection of Young diagrams is a Young diagram and by Lemma \ref{gridsumprops}\eqref{gridintersect}. Property \eqref{griddiff2} holds since $Z \in \Delta(Z,Y)$ implies $Z \boxminus Y \subseteq Z$. 
\end{proof}

\subsection{Enhanced growth}\label{enhgrowthsec}

Given a pair of enhancements $(\vec{r},\vec{c})$, we form a pair of Young diagrams $(\Row,\Col)$ such that the row counts of $\Row$ are given by $\vec{r}$ and the column counts of $\Col$ are given by $\vec{c}$. 
Therefore we use the two pairs interchangeably to describe an enhanced dynamics. The following lemma explains why enhanced growth is simpler than regular growth. 

%%%%% Enhanced growth properties %%%%%%%%%%%
\begin{lemma}\label{enhgrowthprops}
Let $(\Row,\Col)$ be enhancements that span for a zero-set $\z$. The set of occupied sites $A_t$ satisfies the following for all $t \geq 0$:
\begin{enumerate} 
\item The set $A_t$ is a Young diagram. \label{enhyoung}
\item The concave corners of $A_t$ must grow: if $(i-1,j),(i,j-1) \in \overline{A_t}$, then $(i,j) \in A_{t+1}$ for any $i,j \geq 0$. \label{enhconcave}
\item The sites with identical enhancements become occupied simultaneously: if $r_i = r_{i'}$ and $c_j = c_{j'}$, then $(i,j) \in A_t$ if and only if $(i',j') \in A_t$. \label{enhsametime}
\end{enumerate}
\end{lemma}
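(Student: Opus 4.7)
My plan is to establish the three properties in the listed order, with Part 1 serving as a structural tool for Parts 2 and 3.

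For Part 1 I would induct on $t$, with $A_0 = \emptyset$ as a trivial base. In the inductive step, given $(i,j) \in A_{t+1}$ and $(i',j') \preceq (i,j)$ with both outside $A_t$ (the other cases being immediate from the inductive hypothesis), the Young-diagram structure of $A_t$ forces the row through $(i',j')$ to be at least as long as the row through $(i,j)$, so $\row((i',j'), A_t) \geq \row((i,j), A_t)$, and analogously for columns. Combined with the weak monotonicity $r_{j'} \geq r_j$ and $c_{i'} \geq c_i$, this makes the augmented pair for $(i',j')$ dominate that for $(i,j)$; since $\z$ is down-closed and the latter is outside $\z$, so is the former, so $(i',j') \in A_{t+1}$.

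For Part 2 the plan is an argument by contradiction: suppose $(i,j)$ is a concave corner of $A_t$ yet $(i,j) \notin A_{t+1}$. The concave-corner hypothesis combined with Part 1 forces $\row((i,j), A_t) \geq i$ and $\col((i,j), A_t) \geq j$. At every later time $t' \geq t$ for which $(i,j)$ is still empty, Part 1 applied to $A_{t'}$ caps row $j$ at length $i$ and column $i$ at length $j$, while the monotonicity $A_t \subseteq A_{t'}$ gives the matching lower bounds. Consequently $\row((i,j), A_{t'}) = i$ and $\col((i,j), A_{t'}) = j$ are frozen, so the augmented pair $(i + r_j,\, j + c_i)$ is unchanged while $(i,j)$ sits empty; this pair lies in $\z$ by assumption, so $(i,j)$ never joins $A_\infty$, contradicting spanning. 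The degenerate corner $i = j = 0$ is handled identically and encodes the fact that spanning forces $(r_0, c_0) \notin \z$.

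For Part 3 I would again induct on $t$, proving the stronger claim that when $r_j = r_{j'}$ with $j \leq j'$ --- which forces $r_\ell$ to be constant on $[j,j']$ since $\vr$ is weakly decreasing --- rows $j, j+1, \ldots, j'$ of $A_t$ all share a common length, with the symmetric statement for columns $i, \ldots, i'$ when $c_i = c_{i'}$. In the inductive step, Part 1 reduces checking the updated row lengths to testing the growth condition at sites $(k,\ell)$ just past the common length; the row count there equals the common value, the column count depends only on $k$, and $r_\ell$ is constant on $[j,j']$, so the condition is identical across $\ell \in [j,j']$ and these rows extend by the same amount. Chaining the row and column common-length statements via $(i,j) \leftrightarrow (i,j') \leftrightarrow (i',j')$ yields Part 3. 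The main obstacle will be Part 2, since it is the only step that genuinely invokes spanning; the key insight is spotting the freezing phenomenon, namely that while a concave corner sits empty the only sites that could change its row or column count are precisely those whose occupation is prevented by the Young-diagram property of $A_{t'}$, pinning the growth pair inside $\z$ forever.
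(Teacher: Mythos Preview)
Your proposal is correct and follows essentially the same route as the paper: induction for Part~1, contradiction via the spanning hypothesis for Part~2, and an inductive equal-length argument for Part~3. Your write-up is in fact more complete than the paper's, which glosses over the freezing mechanism in Part~2 and omits the inductive justification for equal row/column counts in Part~3; your explicit treatment of both points is an improvement rather than a deviation.
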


\begin{proof}
We prove Property \eqref{enhyoung} by induction. When $t = 0$, $A_0 = \emptyset$. 
Now suppose $A_t$ is a Young diagram for some $t \geq 0$. 
Assume $x = (i,j) \notin A_t$, $x' = (i',j') \notin A_t$, and $x \preceq x'$. 
As $A_t$ is a Young diagram, $\row(x,A_t) \geq \row(x',A_t)$ and $\col(x,A_t) \geq \col(x',A_t)$.
As $c_i \geq c_{i'}$ and $r_i \geq r_{j'}$, and $\z$ is a Young diagram,  
$(\row(x, A_t) + r_j, \col(x,A_t) + c_i) \notin \z$ implies $( \row(x', A_t) + r_{j'}, \col(x',A_t) + c_{i'} ) \notin \z$. Therefore $A_{t+1}$ is also a Young diagram. 

To prove Property \eqref{enhconcave}, suppose $x = (i,j) \notin A_{t+1}$. By Property \eqref{enhyoung}, $(i',j') \notin A_{t+1}$ for all $i' \geq i$ and $j' \geq j$. 
Thus $x' \notin A_s$ for all $s \geq t$ and $x \preceq x'$, which contradicts the fact that $(\Row,\Col)$ spans for $\z$. 

Let $x = (i,j)$ and $x' = (i',j')$. 
Then $(\row(x,A_{t-1}) + r_i, \col(x,A_{t-1}) + c_j) \notin \z$ if and only if $(\row(x',A_{t-1}) + r_{i'}, \col(x',A_{t-1}) + c_{j'}) \notin \z$. 
Therefore $(i,j) \in A_t$ if and only if $(i',j') \in A_t$. This proves property \eqref{enhsametime}. 
\end{proof}

\begin{lemma}\label{partition}
Fix a zero-set $\z$ and let $(\Row,\Col)$ be enhancements that span for $\z$. 
Let $\{I_i\}_{i = 0}^M \subseteq \Z_+$ be maximal intervals of equal column lengths of $\Col$, and similarly let $\{J_i\}_{i = 0}^N \subseteq \Z_+$ be maximal intervals of equal row lengths of $\Row$. Then
\begin{align*}
\bigcup_{i + j < t} I_i \times J_j \subseteq A_t
\end{align*}
for all $t \geq 0$. 
\end{lemma}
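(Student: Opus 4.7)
The plan is to proceed by induction on $t$. The base case $t=0$ is vacuous since the union over $i+j<0$ is empty.

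For the inductive step, suppose $\bigcup_{i+j<t} I_i \times J_j \subseteq A_t$. Since $A_t \subseteq A_{t+1}$, it suffices to show that for every pair $(i_0, j_0)$ with $i_0 + j_0 = t$ (and $i_0 \leq M$, $j_0 \leq N$), the rectangle $I_{i_0} \times J_{j_0}$ is contained in $A_{t+1}$. The crucial observation is that the two relevant parts of Lemma \ref{enhgrowthprops} dovetail: every site in $I_{i_0} \times J_{j_0}$ shares the same enhancement pair (because $c$ is constant on each $I_i$ and $r$ is constant on each $J_j$), so by part \eqref{enhsametime} all such sites become occupied simultaneously. Hence it suffices to establish that the single corner site $(a, b) = (\min I_{i_0}, \min J_{j_0})$ lies in $A_{t+1}$.

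For this I will invoke the concave corner property \eqref{enhconcave}, which requires showing $(a-1, b), (a, b-1) \in \overline{A_t}$. If $i_0 = 0$ then $a = 0$, so $(a-1, b)$ is outside $\Z_+^2$ and thus lies in $\overline{A_t}$ by definition. Otherwise $i_0 \geq 1$ and $a - 1 = \max I_{i_0 - 1}$, so $(a-1, b) \in I_{i_0-1} \times J_{j_0}$, which is contained in $A_t$ by the inductive hypothesis since $(i_0 - 1) + j_0 = t - 1 < t$. The case of $(a, b-1)$ is symmetric, using $I_{i_0} \times J_{j_0-1}$ when $j_0 \geq 1$. Combining, $(a, b) \in A_{t+1}$, and then \eqref{enhsametime} propagates this to the whole rectangle, closing the induction.

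The argument poses no serious obstacle — Lemma \ref{enhgrowthprops} does essentially all the work. The one conceptual step is to recognize that reducing to the corner site $(\min I_{i_0}, \min J_{j_0})$ via \eqref{enhsametime} is exactly what makes the concave corner property \eqref{enhconcave} applicable through the inductive hypothesis on the adjacent rectangles $I_{i_0-1} \times J_{j_0}$ and $I_{i_0} \times J_{j_0-1}$; the remaining issues are only minor boundary bookkeeping at $i_0 = 0$ or $j_0 = 0$, absorbed by passing to the closure $\overline{A_t}$.
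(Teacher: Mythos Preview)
Your proof is correct and follows exactly the approach indicated in the paper's own proof, which is merely the one-line sketch ``We use induction, beginning with the trivial base case $t = 0$. The inductive step follows from Lemma \ref{enhgrowthprops} \eqref{enhconcave} and \eqref{enhsametime}.'' You have supplied precisely the details the paper omits: reducing each new rectangle to its lower-left corner via \eqref{enhsametime}, then feeding the inductive hypothesis on the adjacent rectangles into the concave-corner property \eqref{enhconcave}, with the boundary cases handled by the closure.
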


\begin{proof}
We use induction, beginning with the trivial base case $t = 0$. The inductive step follows from Lemma \ref{enhgrowthprops} \eqref{enhconcave} and \eqref{enhsametime}. 
\end{proof}

\begin{corollary}\label{partitiontimebound}
Let $\Row$ have $M$ nonzero row counts and $\Col$ have $N$ nonzero column counts. If $(\Row,\Col)$ span for $\z$, then $\ten(\z,\Row,\Col) \leq M+N +1$. 
\end{corollary}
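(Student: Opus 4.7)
The plan is to apply Lemma \ref{partition} directly, after a short interval-count. Because the indices in the lemma and in the corollary refer to different objects, I would first introduce fresh names to avoid a notational clash: denote by $\alpha+1$ the number of maximal intervals $I_0,\ldots,I_\alpha$ of equal column lengths of $\Col$, and by $\beta+1$ the number of maximal intervals $J_0,\ldots,J_\beta$ of equal row lengths of $\Row$.

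Next I would bound $\alpha\le N$ and $\beta\le M$. Since $\vc$ is weakly decreasing with exactly $N$ positive entries $c_0\ge\cdots\ge c_{N-1}>0$, one partition piece is $[N,\infty)$ (on which $c_i=0$), while at most $N$ further pieces partition $[0,N-1]$ (at most one per distinct positive value of $c_i$). Hence $\alpha+1\le N+1$, and the same argument applied to $\vr$ gives $\beta+1\le M+1$.

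Finally, $\{I_i\times J_j:0\le i\le\alpha,\ 0\le j\le\beta\}$ is a partition of $\Z_+^2$, so every such pair satisfies $i+j\le\alpha+\beta\le M+N$. Setting $t=M+N+1$ therefore gives $i+j<t$ for every pair in the partition, and Lemma \ref{partition} yields
\[
\Z_+^2=\bigcup_{i=0}^{\alpha}\bigcup_{j=0}^{\beta}I_i\times J_j\ \subseteq\ \bigcup_{i+j<t}I_i\times J_j\ \subseteq\ A_t,
\]
whence $\ten(\z,\Row,\Col)\le M+N+1$. There is no substantive obstacle in this argument: the content is entirely bookkeeping, and the introduction of $\alpha,\beta$ is what keeps the swap of roles between the lemma and the corollary from becoming confusing.
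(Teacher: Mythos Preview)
Your argument is correct and is precisely the bookkeeping the paper leaves implicit (the corollary is stated without proof, as an immediate consequence of Lemma~\ref{partition}). One small interpretive remark: in view of how the corollary is applied in the proof of Theorem~\ref{enhancedUB}, ``$M$ nonzero row counts'' is meant as ``$M$ \emph{distinct} nonzero row counts'' rather than support size; under that reading your inequalities $\alpha\le N$, $\beta\le M$ become equalities, and the rest of your argument is unchanged.
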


The next lemma provides the key connection between the infimal sum and enhanced growth.

\begin{lemma}\label{gridsumspan}
Fix a zero-set $\z$. The enhancements $(\Row,\Col)$ span for $\z$ if and only if 
\begin{align*}
\z \subseteq \Row \boxplus \Col. 
\end{align*}
\end{lemma}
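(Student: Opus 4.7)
The plan is to prove both directions by contrapositive, exploiting the defining identity $(\Row \boxplus \Col)^c = \Row^c + \Col^c$. The statement thus reduces to: $(\Row,\Col)$ spans if and only if no $(u,v) \in \z$ admits a decomposition $(u,v) = (a,b) + (c,d)$ with $(a,b) \in \Row^c$ (equivalently $a \geq r_b$) and $(c,d) \in \Col^c$ (equivalently $d \geq c_c$).

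For the direction ``$(\Row,\Col)$ spans implies $\z \subseteq \Row \boxplus \Col$'', suppose for contradiction that some $(u,v) \in \z$ has such a bad decomposition. I would show by induction on $t$ that the site $(c,b)$ never becomes occupied, contradicting spanning. The inductive step uses Lemma \ref{enhgrowthprops}\eqref{enhyoung}, whose proof does not actually invoke the spanning hypothesis: $A_t$ is a Young diagram, so if $(c,b) \notin A_t$ then $\row((c,b),A_t) \leq c$ and $\col((c,b),A_t) \leq b$. Consequently, the enhanced counts at $(c,b)$ are dominated coordinatewise by $(c+r_b,\, b+c_c) \preceq (c+a,\, b+d) = (u,v) \in \z$, which lies in the Young diagram $\z$, so $(c,b)$ is not added at time $t+1$.

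For the converse, suppose $(\Row,\Col)$ does not span, so $B_\infty := \Z_+^2 \setminus A_\infty$ is nonempty. Since $A_\infty$ inherits the Young-diagram property from each $A_t$, the set $B_\infty$ is upward-closed under $\preceq$, and the well-foundedness of $\preceq$ on $\Z_+^2$ yields a minimal element $(i,j) \in B_\infty$. Minimality combined with downward-closure of $A_\infty$ forces the segments $\{0,\dots,i-1\} \times \{j\}$ and $\{i\} \times \{0,\dots,j-1\}$ into $A_\infty$, giving the exact saturation $\row((i,j),A_\infty) = i$ and $\col((i,j),A_\infty) = j$. Because both counts are finite, they stabilize at some finite time $t_0$; if $(i+r_j,\, j+c_i)$ were outside $\z$, the site $(i,j)$ would be added to $A_{t_0+1}$, contradicting $(i,j) \notin A_\infty$. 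Hence $(i+r_j,\, j+c_i) \in \z$. Writing $(i+r_j,\, j+c_i) = (r_j,j) + (i, c_i)$ with $(r_j,j) \in \Row^c$ and $(i,c_i) \in \Col^c$ exhibits this point in $\Row^c + \Col^c = (\Row \boxplus \Col)^c$, violating $\z \subseteq \Row \boxplus \Col$.

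The main delicate point is the choice of $(i,j)$ in the converse: one needs a \emph{perfect corner} at which row and column counts equal exactly $i$ and $j$, so that the stability pair $(i+r_j,\, j+c_i)$ splits cleanly as $(r_j,j) + (i,c_i)$ with both summands in the respective complements. An arbitrary unoccupied site would yield only $\row \leq i$ and $\col \leq j$, and the corresponding sum would fail to land on the known element of $\z$. Selecting a minimal element of $B_\infty$ is precisely what forces saturation, via the Young-diagram structure of $A_\infty$.
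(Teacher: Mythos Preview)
Your proposal is correct and follows essentially the same route as the paper's proof: both directions are argued by contrapositive, using a minimal unoccupied corner $(i,j)$ of $A_\infty$ to produce the decomposition $(i+r_j,j+c_i)=(r_j,j)+(i,c_i)\in\Row^c+\Col^c$ in one direction, and using a bad decomposition to exhibit a site that is never occupied in the other. Your write-up is more explicit about why the Young-diagram property of $A_t$ holds without the spanning hypothesis and about the finite-time stabilization of the counts, but the underlying argument is the same as the paper's.
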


\begin{proof}
The pair of enhancements $(\Row,\Col)$ does not span if and only if there exists $(i,j) \notin \te^\infty(\emptyset)$ so that $(i-1,j)$ and $(i,j-1)$ are both in $\overline{\te^\infty(\emptyset)}$. 
For this to happen, we must have $(i+r_j,j+c_i) \in \z$. But $(i,c_i) \notin \Col$ and $(r_j,j) \notin \Row$, thus $\Row^c + \Col^c \not\subseteq \z^c$, and so $\z \not\subseteq \Row \boxplus \Col$. 

Conversely, if $\z \not\subseteq \Row \boxplus \Col$, there exist $(i,b) \notin \Row$ and $(a,j) \notin \Col$, so that $(i+a,j+b) \in \z$. 
Then $b \geq c_i$, $a \geq r_j$, and so $(i+r_j,j+c_i) \in \z$. Thus no point outside of $([0,i-1] \times [0,\infty)) \cup ([0,\infty) \times [0,j-1])$ becomes occupied, and consequently $(\Row,\Col)$ does not span. 
\end{proof}

\subsection{Perturbations of zero-sets} \label{zerosetperturbations}

Let $\tau_{\text{\rm line}}(\z,A)$ be the first time that the regular dynamics given by $\z$ covers a line in $\Z_+^2$. 
Define $$\mu_{\text{\rm line}}(\z) = \max_{A \in \mathcal{A}} \{\tau_{\text{\rm line}}(\z,A)\}.$$ 
We omit the simple proof of the following lemma. 

\begin{lemma}\label{haHAA}
For any zero-set $\z$,
\begin{align*}
\M(\z) &\leq \mu_{\text{\rm line}}(\z) + \max \left\{\M\left(\z^{\downarrow 1}\right),\M\left(\z^{\leftarrow 1}\right)\right\}.
\end{align*}
\end{lemma}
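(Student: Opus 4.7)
The plan is to take any $A\in \mathcal{A}(\z)$, split its spanning time as $\tau(\z,A)=\tau+(\tau(\z,A)-\tau)$ where $\tau=\tau_{\text{\rm line}}(\z,A)\le \mu_{\text{\rm line}}(\z)$, and identify the residual dynamics after time $\tau$ with a $\z^{\downarrow 1}$- or $\z^{\leftarrow 1}$-dynamics on a lattice obtained by deleting the covered line. The supremum over $A$ then produces the desired bound.

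By symmetry I may assume that at time $\tau$ a horizontal line $L=L^h(y_0)$ is covered (the vertical case yields the $\z^{\leftarrow 1}$ reduction, which accounts for the maximum on the right). Introduce the row-deleting bijection $\sigma:\Z_+^2\setminus L\to \Z_+^2$ with $\sigma(i,j)=(i,j)$ for $j<y_0$ and $\sigma(i,j)=(i,j-1)$ for $j>y_0$, and set $\hat B_s=\sigma(A_{\tau+s}\setminus L)$. First I would verify that $(\hat B_s)_{s\ge 0}$ is the orbit of $\hat B_0$ under the $\z^{\downarrow 1}$-dynamics. This rests on two identities for $x=(i_x,j_x)\notin L$: $\row(x,A_{\tau+s})=\row(\sigma(x),\hat B_s)$, since $L$ is disjoint from $L^h(x)$, and $\col(x,A_{\tau+s})=\col(\sigma(x),\hat B_s)+1$, since $L\subseteq A_{\tau+s}$ contributes exactly the site $(i_x,y_0)$ to $L^v(x)$; combined with the definition $(r,c)\in\z^{\downarrow 1}\Leftrightarrow (r,c+1)\in\z$, these yield the claim by induction on $s$. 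In particular, $\tau(\z^{\downarrow 1},\hat B_0)=\tau(\z,A)-\tau$.

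The second step, which I expect to be the main obstacle since $\hat B_0$ can be infinite (for instance if more than one line becomes covered at time $\tau$), is to bring the bound $\mu(\z^{\downarrow 1})$ to bear despite $\mu$ being defined over finite spanning sets only. The trick is to exhibit a finite spanning subset $F\subseteq \hat B_0$ for $\z^{\downarrow 1}$; the natural candidate is $F=\sigma(A\setminus L)$. Applying the correspondence of the first step to the set $A\cup L$ in place of $A_\tau$ (which already contains $L$, so the argument runs from $s=0$) shows that the $\z^{\downarrow 1}$-dynamics from $F$ mirrors the $\z$-dynamics from $A\cup L$ outside $L$; since $A\cup L\supseteq A$ and $A$ spans for $\z$, it follows that $F\in\mathcal{A}(\z^{\downarrow 1})$. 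Finally, monotonicity of the $\z^{\downarrow 1}$-dynamics together with $F\subseteq \hat B_0$ gives $\tau(\z^{\downarrow 1},\hat B_0)\le \tau(\z^{\downarrow 1},F)\le \mu(\z^{\downarrow 1})$, and hence $\tau(\z,A)\le \tau+\mu(\z^{\downarrow 1})\le \mu_{\text{\rm line}}(\z)+\max\{\mu(\z^{\downarrow 1}),\mu(\z^{\leftarrow 1})\}$. Taking the supremum over $A$ will complete the proof.
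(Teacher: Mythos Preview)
Your argument is correct. The paper actually omits the proof of this lemma entirely, stating only ``We omit the simple proof of the following lemma,'' so there is nothing to compare against; your row-deletion bijection and reduction to the $\z^{\downarrow 1}$-dynamics is exactly the natural argument one would expect. One cosmetic simplification: rather than introducing the possibly infinite $\hat B_0$ and then dominating it by the finite $F$, you can bypass $\hat B_0$ by observing directly that $A_\tau\supseteq A\cup L$, so monotonicity of the $\z$-dynamics gives $\tau(\z,A)-\tau=\tau(\z,A_\tau)\le \tau(\z,A\cup L)$, and then your correspondence applied once to $A\cup L$ identifies the right-hand side with $\tau(\z^{\downarrow 1},F)\le \M(\z^{\downarrow 1})$.
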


As already remarked, $\mu$ is not apparently monotone with respect to inclusion. 
We do however have a weaker form of monotonicity which is the subject of the next lemma.  

\begin{lemma} \label{shifty}
For any zero-set $\z \neq \emptyset$, 
\begin{align*}
&\M(\z) \geq \max \left\{\M\left(\z^{\downarrow 1}\right),\M\left(\z^{\leftarrow 1}\right)\right\}, \\
&\me(\z) \geq \max \left\{\me\left(\z^{\downarrow 1}\right),\me\left(\z^{\leftarrow 1}\right)\right\}. 
\end{align*}
\end{lemma}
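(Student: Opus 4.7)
By the symmetry swapping rows and columns, the $\leftarrow 1$ inequalities reduce to the $\downarrow 1$ ones, so it suffices to prove $\M(\z) \ge \M(\z^{\downarrow 1})$ and $\me(\z) \ge \me(\z^{\downarrow 1})$. The common mechanism is the identity $(u,v) \notin \z^{\downarrow 1} \iff (u,v+1) \notin \z$, which says that an extra unit of column count converts $\z$-dynamics into $\z^{\downarrow 1}$-dynamics. In each construction below I manufacture that extra unit from a fully occupied bottom row --- explicitly in the regular case, and via the column enhancement in the enhanced case. If $\z^{\downarrow 1} = \emptyset$ then both right-hand quantities equal $1$ and the bounds are trivial, so I assume $\ell_0 \ge 1$, where $\ell_0$ is the length of the bottom row of $\z$.

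For $\M(\z) \ge \M(\z^{\downarrow 1})$, I take $A \in \cA(\z^{\downarrow 1})$ realizing $T := \M(\z^{\downarrow 1})$, write $A^{\uparrow 1} := \{(i,j+1) : (i,j) \in A\}$, and set $A' := A^{\uparrow 1} \cup ([0,\ell_0-1] \times \{0\})$. The seed of size $\ell_0$ in row $0$ forces the entire bottom row of $\Z_+^2$ to be occupied by time $1$, since any empty $(i,0)$ then has $\row \ge \ell_0$ and so $(\row, \col) \notin \z$. I then run two inductions. The ``$\subseteq$'' induction $A'_t \subseteq A_t^{\uparrow 1} \cup (\Z_+ \times \{0\})$ for $t \ge 0$ uses, for $j \ge 1$, the bounds $\row((i,j), A'_t) \le \row((i,j-1), A_t)$ and $\col((i,j), A'_t) \le \col((i,j-1), A_t) + 1$: whenever $(i,j)$ joins $A'_{t+1}$, the $\preceq$-dominating pair $(\row((i,j-1),A_t), \col((i,j-1),A_t)+1)$ is also outside $\z$ by downward closure, hence outside $\z^{\downarrow 1}$, forcing $(i,j-1) \in A_{t+1}$. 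At $t = T-1$, any $(i^*, j^*) \notin A_{T-1}$ then witnesses $(i^*, j^*+1) \notin A'_{T-1}$, giving $\tau(\z, A') \ge T$. The parallel ``$\supseteq$'' induction $A'_{t+1} \supseteq A_t^{\uparrow 1} \cup (\Z_+ \times \{0\})$, with the same inequalities run in reverse, certifies that $A'$ in fact spans for $\z$, so $A' \in \cA(\z)$.

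For $\me(\z) \ge \me(\z^{\downarrow 1})$, I take $(\vec{r}, \vec{c}) \in \Aen(\z^{\downarrow 1})$ realizing $T := \me(\z^{\downarrow 1})$, pick $K$ exceeding both $m$ (with $\z \subseteq R_{m,n}$) and the support of $\vec{c}$, and set $c'_i := c_i + 1$ for $i < K$ and $c'_i := 0$ for $i \ge K$; then $\vec{c}'$ is weakly decreasing and finitely supported. Spanning of $(\vec{r}, \vec{c}')$ for $\z$ is checked via Lemma~\ref{gridsumspan}: for each decomposition $(u,v) = (u_1, v_1)+(u_2, v_2)$ of $(u,v) \in \z$, I apply $\z^{\downarrow 1} \subseteq \Row \boxplus \Col$ to the shifted decomposition $(u, v-1) = (u_1, v_1) + (u_2, v_2-1)$ when $v_2 \ge 1$; use $c'_{u_2} \ge 1$ directly when $v_2 = 0$ and $u_2 < K$; and note that $u_2 \ge K > u$ rules out the remaining case. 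For the timing, induction on $t$ starting from $A'_0 = A_0 = \emptyset$ gives $A'_t \subseteq A_t$: for $i < K$ the update conditions coincide exactly via $c'_i = c_i + 1$, and for $i \ge K$ the $\z$-condition $(\row+r_j, \col) \notin \z$ at $A'_t$ transfers to the $\z^{\downarrow 1}$-condition at $A_t$ via downward closure applied to the $\preceq$-dominating pair $(\row((i,j), A_t)+r_j, \col((i,j), A_t)+1)$. Hence $\ten(\z, \vec{r}, \vec{c}') \ge T$.

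The main obstacle I anticipate is bookkeeping in the inductive comparisons: the row and column counts in the two compared dynamics are related only by one-sided inequalities, and at each step one must orient these inequalities so that the downward closure of $\z$ transports the ``outside $\z$'' property in the right direction. A secondary subtlety, easily overlooked, is that the constructed initial data must in fact span for $\z$ --- slowing the dynamics down could in principle strand some site forever --- which is why I need the reverse induction in the regular case and the Lemma~\ref{gridsumspan} check in the enhanced case.
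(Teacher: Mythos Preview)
Your proof is correct. For the enhanced case you and the paper do essentially the same thing: add $1$ to the column (or row) enhancements on a long initial segment. The paper asserts the resulting equality $\ten(\z^{\leftarrow 1},\vr,\vc)=\ten(\z,\vr',\vc)$ in one line; your inductive containment $A'_t\subseteq A_t$ together with the Lemma~\ref{gridsumspan} check supplies the details the paper suppresses, and the one-sided inequality is all that is needed.

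For the regular dynamics your construction is genuinely different. The paper leaves $A$ in place and adjoins a long line far away, $A'=A\cup(\{M\}\times[0,N])$ with $M,N$ large; this extra column contributes exactly $+1$ to every relevant row count while leaving column counts untouched, so the $\z$-dynamics from $A'$ matches the $\z^{\leftarrow 1}$-dynamics from $A$ step for step on the region that matters, giving the exact equality $\tau(\z,A')=\tau(\z^{\leftarrow 1},A)$ with no further work. You instead shift $A$ up and plant a seed in row~$0$ that fills that row at time~$1$; because the extra line is present only from time~$1$ onward, you need the two sandwiching inductions and obtain $T\le\tau(\z,A')\le T+1$ rather than equality. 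Both routes succeed; the paper's ``line at infinity'' trick is shorter and cleaner, while your version has the virtue of making every comparison of counts explicit. One small remark: your phrase ``the update conditions coincide exactly'' for $i<K$ in the enhanced case overstates what you prove (and need), since only $A'_t\subseteq A_t$ is available; the domination argument you spell out for $i\ge K$ is what actually carries both cases.
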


\begin{proof}
To prove the first inequality, by symmetry we only need to show $\M(\z) \geq \M\left(\z^{\leftarrow 1}\right)$. 
Assume $A \in \mathcal{A}(\z^{\leftarrow 1})$. Let $A' = A \cup (\{(M,0)\} \times [0,N])$.
If $M$ and $N$ are large enough, then $A' \in \mathcal{A}(\z)$ and $\tau(\z^{\leftarrow 1},A) = \tau(\z,A') \leq \M(\z)$. 
Therefore $\M(\z^{\leftarrow 1}) \leq \M(\z)$. 

For the second inequality, we again only need to show $\Men(\z) \geq \Men\left( \z^{\leftarrow 1} \right)$. 
Assume $(\vr,\vc) \in \Aen(\z^{\leftarrow 1})$, where $\vr = (r_0,\ldots,r_{N_0})$ and $\vc = (c_0,\ldots,c_{M_0})$. Let 
\begin{align*}
\vr\,' = (r_0+1,\ldots,r_{N_0}+1,1,\ldots,1,0,\ldots) 
\end{align*}
be row enhancements with finite support $[0,N]$. 
For $N$ are large enough, $(\vr\,',\vc) \in \Aen(\z)$, and $\ten(\z^{\leftarrow 1},\vr,\vc) = \ten(\z,\vr\,',\vc) \leq \Men(\z)$. Thus $\M(\z^{\leftarrow 1}) \leq \Men(\z)$. 
\end{proof}

The next lemma gives a converse inequality. 

\begin{lemma} \label{then2}
For any zero-set $\z$,
\begin{align*}
\Men(\z)\le \Men(\z^{\swarrow 1})+2.
\end{align*}
\end{lemma}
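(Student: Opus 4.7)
The plan is to start with an arbitrary $(\vr,\vc)\in\Aen(\z)$ and construct $(\vr',\vc')\in\Aen(\z^{\swarrow 1})$ whose spanning time is at least $\ten(\z,\vr,\vc)-2$; taking suprema then yields the stated inequality. A natural candidate is the pair of left-shifted enhancements $r'_j=r_{j+1}$, $c'_i=c_{i+1}$, whose associated Young diagrams are $\cR'=\cR^{\downarrow 1}$ and $\cC'=\cC^{\leftarrow 1}$. A direct computation (which can also be read off the infimal-convolution interpretation of $\boxplus$ recalled in Section~\ref{YDoperations}) yields the identity $\cR^{\downarrow 1}\boxplus\cC^{\leftarrow 1}=(\cR\boxplus\cC)^{\swarrow 1}$; combined with $\z\subseteq\cR\boxplus\cC$ from Lemma~\ref{gridsumspan}, this gives $\z^{\swarrow 1}\subseteq\cR'\boxplus\cC'$, so by Lemma~\ref{gridsumspan} again $(\vr',\vc')\in\Aen(\z^{\swarrow 1})$.

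The core of the proof is a coupling of the two enhanced dynamics $A_t=\te^t(\emptyset)$ for $(\z,\vr,\vc)$ and $B_t=\te^t(\emptyset)$ for $(\z^{\swarrow 1},\vr',\vc')$. I would prove by induction on $t$ the invariant
\[
A_{t+2}^{\swarrow 1}\supseteq B_t.
\]
The base $t=0$ is immediate. For the inductive step, a site $(i,j)\in B_{t+1}$ satisfies $(\row((i,j),B_t)+r_{j+1}+1,\,\col((i,j),B_t)+c_{i+1}+1)\notin\z$, using the equivalence $(p,q)\notin\z^{\swarrow 1}\iff(p+1,q+1)\notin\z$. The induction hypothesis, together with the Young-diagram structure of $A_{t+2}$ supplied by Lemma~\ref{enhgrowthprops}, gives $\row((i+1,j+1),A_{t+2})\ge\row((i,j),B_t)+1$ and the same bound for columns, so the $A$-occupation criterion at $(i+1,j+1)$ is implied and $(i+1,j+1)\in A_{t+3}$, that is, $(i,j)\in A_{t+3}^{\swarrow 1}$.

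Once the invariant is established, spanning of $B$ at time $s$ forces $A_{s+2}^{\swarrow 1}\supseteq\bZ_+^2$; the Young-diagram property of $A_{s+2}$ then forces $A_{s+2}=\bZ_+^2$. Hence $\ten(\z,\vr,\vc)\le\ten(\z^{\swarrow 1},\vr',\vc')+2\le\Men(\z^{\swarrow 1})+2$, and taking supremum over $(\vr,\vc)\in\Aen(\z)$ completes the argument.

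The main obstacle is the inductive step at degenerate sites, i.e.~those $(i,j)$ for which row $j+1$ of $A_{t+2}$ is empty (or column $i+1$ is empty); at such sites the strict $+1$ gain in row/column counts is not immediate from the induction hypothesis. In those cases the induction hypothesis itself forces $\row((i,j),B_t)$ (resp.~$\col((i,j),B_t)$) to vanish as well, and one must verify by separate case analysis that the shifted $B$-condition still implies the $A$-condition at $(i+1,j+1)$, or else that such sites cannot actually belong to $B_{t+1}$. A careful bookkeeping of how the operations $^{\downarrow 1}$, $^{\leftarrow 1}$, and $^{\swarrow 1}$ interact with the Young-diagram structure of $A_{t+2}$ should resolve this.
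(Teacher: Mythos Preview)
Your approach differs from the paper's in a way that creates the very obstacle you flag. Two points.

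First, the identity $\cR^{\downarrow 1}\boxplus\cC^{\leftarrow 1}=(\cR\boxplus\cC)^{\swarrow 1}$ is false as an equality: take $\cR=R_{1,2}$ and $\cC=\emptyset$, so the left side is $R_{1,1}$ while the right side is $\emptyset$. Only the containment $(\cR\boxplus\cC)^{\swarrow 1}\subseteq\cR^{\downarrow 1}\boxplus\cC^{\leftarrow 1}$ holds (which is what you actually need for $(\vr',\vc')$ to span), so this is a minor slip.

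Second, and more importantly, the degenerate case is a genuine gap, not mere bookkeeping. When row $j{+}1$ of $A_{t+2}$ is empty, your $A$-condition at $(i{+}1,j{+}1)$ reads $(r_{j+1},\,\cdot)\notin\z$, whereas the $B$-condition only supplies $(r_{j+1}{+}1,\,\cdot)\notin\z$. Since the column of $\z$ at abscissa $r_{j+1}$ can be arbitrarily taller than at $r_{j+1}{+}1$, there is no direct implication. Your fallback---that such $(i,j)$ cannot lie in $B_{t+1}$---would require propagating the emptiness of row $j{+}1$ in $A_{t+2}$ back through the time-shifted, index-shifted coupling to a constraint on $B$, and nothing in the setup makes that automatic.

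The paper avoids all of this by a different reduction: instead of shifting indices ($r'_j=r_{j+1}$), it \emph{decrements values}, taking $\cR^{\leftarrow 1}$ and $\cC^{\downarrow 1}$ so that $r'_j=r_j-1$ and $c'_i=c_i-1$ on the supports. With this choice the occupation criteria for $(\z,\cR,\cC)$ and $(\z^{\swarrow 1},\cR^{\leftarrow 1},\cC^{\downarrow 1})$ at any $(i,j)$ in the rectangle $[0,M_0{-}1]\times[0,N_0{-}1]$ are literally identical, since $(p,q)\notin\z^{\swarrow 1}$ is equivalent to $(p{+}1,q{+}1)\notin\z$. Hence the two dynamics agree on this rectangle at every time, with no time lag and no degenerate cases; the rectangle is covered by time $\Men(\z^{\swarrow 1})$, and two further steps finish. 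The moral is that matching the shift of $\z$ by a shift of the \emph{values} of the enhancements, rather than of their \emph{indices}, gives an exact coupling instead of an inequality.
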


\begin{proof}
Assume that $\cR$ and $\cC$, with respective number of rows and columns $N_0$ and  $M_0$, span for the zero-set $\z$. 
Then $\cR^{\leftarrow 1}$ and $\cC^{\downarrow 1}$ span for $\z^{\swarrow 1}$ and do so in at most $\Men(\z^{\swarrow 1})$ steps. 
The enhanced dynamics given by $(\z,\cR,\cC)$ and $(\z^{\swarrow 1}, \cR^{\leftarrow 1},\cC^{\downarrow 1})$ agree on the rectangle $[0,M_0-1]\times [0, N_0-1]$ by Lemma \ref{enhgrowthprops}. 
Therefore, the dynamics given by $(\z,\cR,\cC)$ covers this rectangle by time $\Men(\z^{\swarrow 1})$, and then needs at most two additional steps to fully occupy $\bZ_+^2$. 
The inequality follows. 
\end{proof}

\subsection{Thin sets}

As we will see in the next lemma, it is advantageous to 
permute rows and columns to arrange sites in a thin set 
in a certain manner reminiscent of convexity 
(see Figure \ref{FigSA}). We formalize this arrangement next.

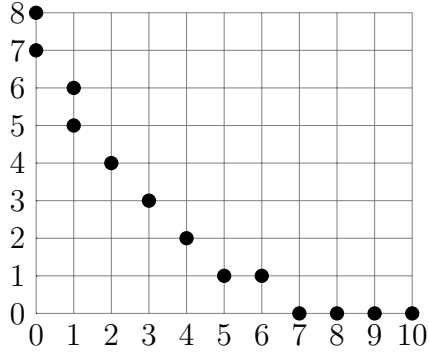
\begin{figure}[!t]
\centering
\captionsetup{justification=centering,margin=2cm}

\begin{tikzpicture}[scale=0.50]
	
	\draw[step=1cm,gray,very thin] (0,0) grid (10,8);
	\foreach \x in {0,1,2,3,4,5,6,7,8,9,10}
		\draw (\x cm, 0pt) node (\x cm,0pt) [anchor=north] {$\x$};
	\foreach \y in {0,1,2,3,4,5,6,7,8}
		\draw (0pt,\y cm) -- (0pt,\y cm) node [anchor=east] {$\y$};
	
	\foreach \x in {7,8,9,10}
		\filldraw[black] (\x,0) circle (5pt);
	\foreach \x in {5,6}
		\filldraw[black] (\x,1) circle (5pt);
	
	\filldraw[black] (4,2) circle (5pt);
	\filldraw[black] (3,3) circle (5pt);
	\filldraw[black] (2,4) circle (5pt);
	
	\foreach \y in {7,8}
		\filldraw[black] (0,\y) circle (5pt);
	\foreach \y in {5,6}
		\filldraw[black] (1,\y) circle (5pt);
		
\end{tikzpicture}	

\caption{A thin set in the standard arrangement with $\vec{r} = (4,2)$, $\vec{c}=(2,2)$, and $w=3$.} \label{FigSA}
%\vspace{-0.5cm}
\end{figure}

Two sets $A_1,A_2\subseteq \bZ_+^2$ are {\em equivalent} if there are permutations of rows and columns of $\bZ_+^2$ that map $A_1$ to $A_2$. It is clear that the spanning times of two equivalent sets are the same. 

An equivalence class of thin sets is given by finite (possibly empty) weakly decreasing vectors $\vec{r}$ and $\vec{c}$ with integer entries of at least $2$, which specify the number of occupied sites in the rows and columns that contain at least two sites, 
and a number $w \geq 0$ of isolated occupied sites. We now identify a specific representative of this equivalence class.

We say that a thin set $A$ is in the {\em standard arrangement} (see Figure \ref{FigSA}) if the following hold:
\begin{itemize}
\item The row counts $\row((0,j),A)$, $j\ge 0$, and column counts $\col((i,0),A)$, $i\ge 0$, are weakly decreasing.
\item If $x, x'\in A$ and $x \preceq x'$, then $x$ and $x'$ are on the same line.   
\end{itemize}

To achieve the standard arrangement, let $N_0$ (resp., $M_0$) be the number of entries of $\vr$ (resp., $\vc$), and consider the rectangle $R_{M_0+w+|\vr|, N_0+w+|\vc|}$. 
The sites in $A$ comprise, in order, the following diagonally adjacent intervals connecting the top left corner of this rectangle with its bottom right corner: 
vertical intervals of $c_i$ sites, $i=0,\ldots, N_0-1$, followed by $w$ single sites, and followed by horizontal intervals of $r_i$ sites, $i=N_0-1,\ldots, 0$ (see Figure \ref{FigSA}). 
It is straightforward to see that the standard arrangement is unique. 

\begin{lemma}\label{thingrowthprops}
Let $A$ be a thin set in the standard arrangement. Assume $Y\subset \bZ_+^2$ is a Young diagram. Let $A_0=A\cup Y$ and $A_1=\cT(A_0)$. Then 
\begin{enumerate}
\item \label{rowcolmonotone} If $x \preceq x'$, then for all $t\geq 0$, $\row(x,A_0) \geq \row(x',A_0)$ and $\col(x,A_0) \geq \col(x',A_0)$.  
\item \label{growthtype} If $x\preceq x'$ and $x'\in A_1\setminus A$, then $x\in A_1$.
\item \label{shapetype} The set $A_1$ is the union of a Young diagram and $A$.  
\end{enumerate}
Moreover, $A_t=\cT^t(A_0)$ is the union of a Young diagram and $A$ for all $t\ge 0$. 
\end{lemma}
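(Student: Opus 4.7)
The plan is to prove the three numbered claims in order and to deduce the ``moreover'' statement by induction on $t$. For claim (\ref{rowcolmonotone}), I would first observe, by traversing the three zones of the standard arrangement --- row intervals at the bottom, $w$ isolated diagonal sites in the middle, and column intervals at the top --- that the sites of $A$ in row $j$ form a contiguous interval $\alpha(j)=[m_j,M_j]$ (possibly empty) with $m_j$, $M_j$, and $a_j:=M_j-m_j+1$ all weakly decreasing in $j$. The Young property of $Y$ gives $\beta(j):=\{i:(i,j)\in Y\}=[0,k_j-1]$ with $k_j$ weakly decreasing. Since $\row((i,j),A_0)=|\alpha(j)\cup\beta(j)|$, and the column statement is symmetric, it remains to show $|\alpha(j_1)\cup\beta(j_1)|\ge|\alpha(j_2)\cup\beta(j_2)|$ whenever $j_1\le j_2$. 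Writing $\alpha_i=\alpha(j_i)$, $\beta_i=\beta(j_i)$ and using $\beta_2\subseteq\beta_1\subseteq\alpha_1\cup\beta_1$, one gets
\[
|\alpha_1\cup\beta_1|-|\alpha_2\cup\beta_2|=|(\alpha_1\cup\beta_1)\setminus\beta_2|-|\alpha_2\setminus\beta_2|\ge|\alpha_1\setminus\beta_2|-|\alpha_2\setminus\beta_2|,
\]
reducing the problem to showing $|\alpha_1\setminus\beta_2|\ge|\alpha_2\setminus\beta_2|$. Both are intervals of the form $[\max(m_i,k_2),M_i]$, and a short case analysis on whether $m_i$ lies above or below $k_2$, using $M_1\ge M_2$ and $M_1-m_1\ge M_2-m_2$, settles this. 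I expect this reduction together with its accompanying bookkeeping to be the main technical step.

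For claim (\ref{growthtype}), suppose $x\preceq x'$ with $x'\in A_1\setminus A$. If $x'\in Y$, then $x\in Y\subseteq A_0\subseteq A_1$ by downward closure of $Y$. Otherwise $x'\in A_1\setminus A_0$, which gives $(\row(x',A_0),\col(x',A_0))\notin\z$; by claim (\ref{rowcolmonotone}) the pair at $x$ dominates the pair at $x'$ coordinatewise, and since $\z$ is a Young diagram its complement is upward closed, so $(\row(x,A_0),\col(x,A_0))\notin\z$ as well, giving $x\in A_1$. Claim (\ref{shapetype}) then follows by taking $D$ to be the downward closure in $\bZ_+^2$ of $A_1\setminus A$: $D$ is a Young diagram by construction, $D\subseteq A_1$ by (\ref{growthtype}), and $A_1=A\cup D$ since $A_1\setminus A\subseteq D\subseteq A_1$.

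The moreover statement follows by induction on $t$, with base case $A_0=A\cup Y$ given. For the inductive step, if $A_t=A\cup Y_t$ with $Y_t$ a Young diagram, then claims (\ref{rowcolmonotone})--(\ref{shapetype}) apply to the pair $(A,Y_t)$ --- as they use only that $A$ is in standard arrangement and that the auxiliary set is Young --- yielding $A_{t+1}=\cT(A\cup Y_t)=A\cup Y_{t+1}$ for some Young diagram $Y_{t+1}$, closing the induction.
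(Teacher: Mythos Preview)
Your proof is correct and follows essentially the same approach as the paper. For part~(\ref{rowcolmonotone}) both arguments split a row of $A_0$ into its $Y$-part (an initial segment, nested as $j$ increases) and its $A$-part (a contiguous interval shifting left and shrinking as $j$ increases), and then compare cardinalities; the paper's inequality $|J'\setminus I|\le|J\setminus I|$ is exactly your $|\alpha_2\setminus\beta|\le|\alpha_1\setminus\beta|$, which you justify explicitly via the case analysis while the paper simply asserts it from the standard arrangement. Parts~(\ref{growthtype}), (\ref{shapetype}), and the inductive ``moreover'' are handled identically in both.
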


\begin{proof}
To prove \eqref{rowcolmonotone}, it is, by symmetry, enough to prove the inequality for the row counts. Let $x=(i,j)$ and $x'=(i',j')$, where $i\le i'$ and $j\le j'$. 
Let $I'=\{a\in \bZ_+: (a,j')\in Y\}$, $I=\{a\in \bZ_+: (a,j)\in Y\}$, $J'=\{a\in \bZ_+: (a,j')\in A\}$, and $J=\{a\in \bZ_+: (a,j)\in A\}$. 
As $Y$ is a Young diagram, $I'\subseteq I$. As $A$ is in the standard arrangement, $|J'\setminus I|\le |J\setminus I|$. Therefore, 
\begin{align*}
\row(x',A_0) &= |I'\cup J'| \\
&= |I'|+|(J'\cap I)\setminus I'|+|J'\setminus I| \\
&\le |I'|+|I\setminus I'|+|J\setminus I|\\&=|I\cup J| \\
&= \row(x,A_0).
\end{align*}
This establishes \eqref{rowcolmonotone}, which immediately implies \eqref{growthtype}. Then \eqref{shapetype} follows, as $A_1=A\cup Y_1$, where 
$$Y_1=\bigcup_{x'\in A_1\setminus A}\{x:x\preceq x'\}$$ 
is a Young diagram. Finally, the last claim follows by induction. 
\end{proof}

%%% Special Cases %%%

\section{Special cases}\label{specialcases}

In this section we prove results on $\M(\z)$ for two special cases of neighborhood growth.
First is line growth, where $\z$ is a single rectangle. The second is {\em L-growth}, where $\z$ is a union of two rectangles: $\z = R_{a,b} \cup R_{c,d}$, where $a,b,c,d \in \N$ such that $a>c$ and $d>b$. 
It turns out that the bounds for L-growth do not depend on the larger numbers $a$ and $d$. 

\begin{lemma}\label{pastnbhd}
Let $\z = R_{m,n}$ and let $A \subseteq \Z_+^2$. If $L^h(x) \subseteq \T^2(A) \setminus \T(A)$, then there exists at least one site $y \in L^h(x)$ such that $L^v(y) \subseteq \T(A) \setminus A$. 
Similarly, if $L^v(x) \subseteq \T^2(A) \setminus \T(A)$, then there exists at least one site $y \in L^v(x)$ such that $L^h(y) \subseteq \T(A) \setminus A$. 
\end{lemma}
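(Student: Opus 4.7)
My plan is first to reduce the step-two addition condition on the row $L^h(x)$ to a pure column-count requirement, and then to extract a suitable column $y \in L^h(x)$ by a careful choice of index.

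As a starting observation, $A \subseteq \T(A)$, so the hypothesis $L^h(x) \subseteq \T^2(A)\setminus\T(A)$ forces $L^h(x)\cap A = \emptyset$. Consequently, $\row(z, A) = \row(z, \T(A)) = 0$ for every $z \in L^h(x)$. Since every such $z$ belongs to $\T^2(A)\setminus\T(A)$, the defining condition $(\row(z,\T(A)),\col(z,\T(A))) \notin R_{m,n}$ combined with the vanishing row count forces $\col(z,\T(A)) \geq n$ for every $z$ on the row. In particular, every column passing through $L^h(x)$ must accumulate at least $n$ sites of $\T(A)$.

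Next I would decompose the $\T(A)$-content of each column $L^v((i,\cdot))$ into three sources: sites already in $A$; sites added at step one because column $i$ is $n$-complete in $A$, i.e.\ $\col((i,0),A)\ge n$; and sites added at step one because their row is $m$-complete in $A$. Finiteness of $A$ caps the last of these contributions by a fixed number, so if a column through $L^h(x)$ contributes $n$ or more sites to $\T(A)$ at the relevant rows, it is forced to be $n$-complete, making the entire column $L^v(y) \subseteq \T(A)$. This is the mechanism I would exploit to identify a candidate $y = (i, j) \in L^h(x)$ with $L^v(y) \subseteq \T(A)$.

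The hard part is the ``$\setminus A$'' refinement: one must exhibit a column that is fully occupied in $\T(A)$ yet disjoint from $A$. My plan is to use the infinitude of $L^h(x)$ together with the finiteness of $A$ to choose $i$ far enough that the column contains no $A$-sites, and then to combine the column-count information with the structural description of $\T(A)$ to force the desired picture. The main obstacle I anticipate is reconciling $L^v(y) \subseteq \T(A)$ (which in the normal case requires $\col((i,0),A)\geq n$) with $L^v(y)\cap A = \emptyset$ (which requires $\col((i,0),A) = 0$); I expect this to be resolved by a delicate case analysis exploiting both the row and column hypotheses stated in the lemma and the special feature of line growth that each column either grows all at once or not at all.
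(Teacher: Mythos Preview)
The contradiction you flag at the end is real, not a technicality to be dissolved by case analysis. Under your literal reading of the hypothesis you have $L^h(x)\cap\T(A)=\emptyset$, so any $y\in L^h(x)$ already satisfies $y\notin\T(A)$; since $y\in L^v(y)$, this alone gives $L^v(y)\not\subseteq\T(A)$, and the literal conclusion $L^v(y)\subseteq\T(A)\setminus A$ is outright impossible for every $y$ on the row. Concretely, with $m=n=2$, $A=\{(0,0),(1,0),(2,1),(3,1)\}$ and $x=(0,2)$, one checks that $\T(A)=\Z_+\times\{0,1\}$, so the literal hypothesis holds while no column works. Your decomposition of $\col(\cdot,\T(A))$ into three sources is correct but does not help here, and the plan cannot be completed as written.

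The paper reads both the hypothesis and the conclusion as statements about when a line first becomes \emph{covered}: the row $L^h(x)$ is fully occupied in $\T^2(A)$ but was not fully occupied in $\T(A)$, and one seeks a column fully occupied in $\T(A)$ but not already in $A$. With that reading the argument is a single stroke, quite different from your column-by-column analysis: from $\row(x,\T(A))\ge m$ and $\row(x,A)<m$ one picks a site $y\in L^h(x)\cap(\T(A)\setminus A)$; since $\row(y,A)=\row(x,A)<m$ and yet $y$ was added at step one, the rectangular rule forces $\col(y,A)\ge n$, hence $L^v(y)\subseteq\T(A)$, while $y\notin A$ witnesses that this column was not previously covered. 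The missing idea in your plan is precisely this: isolate a single site on the row that is new at time $1$ and read off its column count, rather than trying to control $\col(\cdot,\T(A))$ across all columns and then strip out $A$ afterwards.
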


\begin{proof}
By symmetry we may assume that $L^h(x) \subseteq \T^2(A) \setminus \T(A)$. Then $\row(x,\T(A)) \geq m$ and $\row(x,A) < m$. There exists at least one site $y \in L^h(x)$ such that $y \in \T(A) \setminus A$. 
Since $\row(y,A) < m$, we must have that $\col(y,A) \geq n$. This implies $L^v(y) \subseteq \T(A) \setminus A$.
\end{proof} 

\begin{proposition} \label{rectmn}
If $\z = R_{m,n}$, then with $m \neq n$, then 
\begin{align*}
\M(\z) = \Mth(\z) =\begin{cases} 2\min\{m,n\}, & \text{if $m \neq n$}, \\ 2n-1, & \text{if $m=n$}. \end{cases}
\end{align*}
\end{proposition}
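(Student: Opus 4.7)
The proof has two parts, upper and lower bound; assume without loss of generality $m \le n$.

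\emph{Upper bound.} For a spanning set $A$, let $r_t$ and $c_t$ denote the numbers of fully occupied rows and columns of $A_t$. The key \emph{cascade fact} is that once $r_t \ge n$, every column contains at least $n$ occupied sites (one per filled row), so every column fills at step $t+1$ and hence $A_{t+1} = \bZ_+^2$; symmetrically when $c_t \ge m$. Thus $\tau \le t^*+1$ with $t^* := \min\{t : r_t \ge n \text{ or } c_t \ge m\}$. To bound $t^*$ I use Lemma~\ref{haHAA}, noting that $\mu_{\text{line}}(R_{m,n})=1$ since the only way to grow in line growth is to fill an entire line; this yields the recursion $\M(R_{m,n}) \le 1 + \max\{\M(R_{m,n-1}),\M(R_{m-1,n})\}$. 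Combined with Lemma~\ref{pastnbhd}, which enforces alternation between newly filled rows and columns and exploits the asymmetry (a row needs only $m$ sites while a column needs $n \ge m$), this yields the sharp bound $t^* \le 2m-1$ (resp.\ $2n-2$ when $m=n$).

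\emph{Lower bound.} I construct explicit spanning thin sets realizing equality. For $m < n$, the set consists of a column of $n$ sites in column~$0$, a row of $m$ sites in row~$0$, and $w = m-1$ intermediate diagonal sites, arranged in standard position (Section~\ref{preliminaries}) with sufficient spacing to avoid unwanted collisions. Step-by-step verification shows that column~$0$ and row~$0$ both fill at $t=1$, at each subsequent time exactly one additional row or column fills via a diagonal's contribution combining with already filled lines, and a final cascade completes by step $2m$. For $m=n$, one additional diagonal site together with rearranged auxiliary sites extends the cascade to step $2n-1$.

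\emph{Main obstacle.} The delicate part is the tight upper bound. Lemma~\ref{haHAA} alone yields only the much weaker bound of order $m+n$ (via the recursion with base cases $\M(R_{0,n}) = \M(R_{m,0}) = 1$), so Lemma~\ref{pastnbhd} must be invoked to exploit the row/column asymmetry and cut this down to $2m$ when $m<n$. The lower-bound construction, though explicit, requires careful arithmetic to ensure each cascade step fires at precisely the intended time rather than skipping ahead, particularly in the $m=n$ case where the extra diagonal site must lengthen the cascade without prematurely triggering a mass fill.
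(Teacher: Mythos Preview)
Your lower bound construction is incorrect: the thin set you describe does not span. With $m<n$ and $m\ge 3$, after time $1$ column~$0$ and row~$0$ are covered, but at time $2$ each isolated diagonal site $(i,j)$ has exactly two occupied sites in its row (namely $(0,j)$ and $(i,j)$) and two in its column, so neither count reaches $m$ or $n$ and the dynamics stalls with $A_2=A_1$. The cascade you describe never starts. The paper's construction is quite different: with $m>n$ it takes $\vec r=(m-1,m-2,\ldots,2)$, $\vec c=(n,n-1,\ldots,2)$ and $w=2$. The point is that at time $1$ only the column of size $n$ fills; this contributes one site to the row of size $m-1$, bringing it to $m$ so it fills at time $2$; that in turn brings the column of size $n-1$ up to $n$, and so on. The graded lengths, not isolated sites, are what drive the one-line-per-step cascade.

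Your upper bound sketch also has a gap. The recursion from Lemma~\ref{haHAA} is a detour: as you note yourself, it gives only an $O(m+n)$ bound, and there is no mechanism by which ``combining'' it with Lemma~\ref{pastnbhd} sharpens this. The paper does not use Lemma~\ref{haHAA} at all here. Instead it applies Lemma~\ref{pastnbhd} iteratively to build an alternating chain of \emph{newly} covered lines going backwards in time: a row newly covered at time $t$ forces a column newly covered at $t-1$, which forces a row newly covered at $t-2$, and these lines are pairwise distinct. This yields directly that at time $2m-2$ (your convention $m\le n$) at least $m-1$ rows and $m-1$ columns are covered. You then need a separate argument---which you omit---using the spanning hypothesis to show that at time $2m-1$ enough additional rows are covered to trigger the final cascade.
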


\begin{proof}
We only address the case when $m > n$, as the case $m=n$ is similar. 
Let $A$ be a spanning set for $\z$. By Lemma \ref{pastnbhd}, there are at least $n-1$ covered rows and $n-1$ covered columns at time $t = 2n-2$. 
At $t = 2n-1$, every column containing at least one site that lies outside of the $n-1$ spanned rows and $n-1$ spanned columns becomes covered. As $A$ spans, there must be at least $m$ covered columns at $t = 2n-1$.  
Therefore $\tau(\z,A) \leq 2n$. 

For the lower bound let $A$ be a thin set in the standard arrangement given by $\vr = (m-1,m-2,\ldots,2)$, $\vc = (n,n-1,\ldots,2)$, and $w = 2$. It is straightforward to check that $A$ spans in $2n$ steps. 
\end{proof} 

\begin{lemma}\label{everytwo}
Let $\z = R_{a,b} \cup R_{c,d}$ and let $A \subseteq \Z_+^2$. In every two time steps, at least one line is covered. 
\end{lemma}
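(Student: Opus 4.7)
The plan is to prove the sharper statement that between any time $t$ and $t+2$, either a line gets newly covered, or $A_{t+2} = A_{t+1}$ (so the dynamics has already stabilized and no further lines are going to be covered anyway). The key preliminary observation, in the spirit of Lemma \ref{pastnbhd}, is that for $\z = R_{a,b} \cup R_{c,d}$ with $a > c$ and $d > b$, a site $x \notin A_s$ joins $A_{s+1}$ in exactly one of three ways: (i) $\row(x,A_s) \geq a$, (ii) $\col(x,A_s) \geq d$, or (iii) $\row(x,A_s) \in [c,a-1]$ and $\col(x,A_s) \in [b,d-1]$. In cases (i) and (ii), every other empty site on the horizontal, respectively vertical, line through $x$ shares its row, respectively column, count, so that entire line is covered at step $s+1$. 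Hence if no line is newly covered at step $s+1$, every newly added site must be of type (iii).

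The next step is to assume, aiming for a contradiction, that no line is newly covered at either step $t+1$ or step $t+2$, while some site $y = (i,j)$ still joins $A_{t+2} \setminus A_{t+1}$. Writing $R = \row(y,A_t)$, $C = \col(y,A_t)$, $R' = \row(y,A_{t+1})$, $C' = \col(y,A_{t+1})$, the hypotheses give $(R,C) \in \z$ together with $R' \in [c,a-1]$ and $C' \in [b,d-1]$. I would then split on which sub-rectangle of $\z$ contains $(R,C)$. If $R < c$, then $R' \geq c > R$ forces some new site to join row $j$ at step $t+1$; that site is itself of type (iii), which requires $R \geq c$ on row $j$, a contradiction. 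If instead $R \geq c$, then necessarily $C < b$ (since $(R,C)$ cannot lie in $R_{c,d}$ and $b < d$), and $C' \geq b > C$ forces a new type (iii) site on column $i$ at step $t+1$, requiring $C \geq b$, another contradiction. The remaining possibility $R \geq c$ and $C \geq b$ with $R < a$ and $C < d$ would give $(R,C) \notin \z$, contradicting $y \notin A_{t+1}$. This rules out a nonempty $A_{t+2} \setminus A_{t+1}$, completing the argument.

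The main subtlety I anticipate is the asymmetric shape of $\z$, which makes the case split slightly delicate---one must use $a > c$ and $d > b$ both to justify the trichotomy (i)--(iii) and to conclude that a site $y \notin A_{t+1}$ with $(R,C) \in \z$ and $R \geq c$ must lie in $R_{a,b} \setminus R_{c,d}$ and so satisfy $C < b$. Once this case split is in place, each branch yields a short contradiction by isolating a type (iii) site on the offending row or column, and the key structural fact---that under type (iii) growth the row/column count of a site does not grow unless its row/column count already exceeds the relevant threshold---does all the work.
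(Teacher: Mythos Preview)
Your proof is correct and follows essentially the same approach as the paper: both arguments rest on the trichotomy that a newly occupied site under $\z = R_{a,b}\cup R_{c,d}$ either has $\row\ge a$, or $\col\ge d$, or has $(\row,\col)\in[c,a-1]\times[b,d-1]$, with the first two alternatives immediately covering a line. The paper picks a site $x\in\T^2(A)\setminus\T(A)$, finds a new neighbor $y$ on $L^h(x)$, and directly locates a covered line through $y$ or through a further neighbor $z\in L^v(x)$; you instead assume no line is covered at steps $t+1,t+2$, force every new site to be of type (iii), and derive a contradiction from the row/column constraints. The case split and the contradiction are the same in substance. (One minor quibble: the three alternatives are not literally mutually exclusive---a site can satisfy both (i) and (ii)---but this is irrelevant to the argument.)
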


\begin{proof}
Suppose $x \in \T^2(A) \setminus \T(A)$.  There exists $y \in \Ne(x)$ such that $y \in \T(A) \setminus A$. Suppose $y \in L^h(x)$. 
Then either $\col(y,A) \geq d$, or $b \leq \col(y,A) < d$ and $c \leq \row(y,A) < a$. 

If $\col(y,A) \geq d$, then column $L^v(y)$ is covered in $\T(A)$.  
Otherwise, $b \leq \col(y,A) < d$ and $c \leq \row(y,A) = \row(x,A) < a$, and $\col(x,A) < b$.
There exists $z \in L^v(x)$ such that $z \in \T(A) \setminus A$. Then $\col(x,A) = \col(z,A) < b$ which implies that $\row(z,A) \geq a$. Thus row $L^h(z)$ is covered in $\T(A)$. 
\end{proof}

\begin{proposition}\label{Lbound}
If $\z = R_{a,b} \cup R_{c,d}$, then 
\begin{align*}
2\min\{b,c\} \leq \M(\z) \leq 2(b+c)
\end{align*}
\end{proposition}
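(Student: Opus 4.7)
The plan is to prove the two bounds separately.

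\textbf{Upper bound $\M(\z) \leq 2(b+c)$.} I will induct on the quantity $b+c$. The base case covers $b=0$ or $c=0$: one of the rectangles in $\z$ vanishes and $\z$ reduces to a single rectangle, so Proposition~\ref{rectmn} gives $\M(\z) \leq 2\min\{\cdot,\cdot\} \leq 2(b+c)$. For the inductive step with $b,c \geq 1$, I will invoke Lemma~\ref{haHAA} to obtain
\[
\M(\z) \leq \mu_{\text{line}}(\z) + \max\{\M(\z^{\downarrow 1}),\, \M(\z^{\leftarrow 1})\}.
\]
A direct computation shows $\z^{\downarrow 1} = R_{a,b-1} \cup R_{c,d-1}$ and $\z^{\leftarrow 1} = R_{a-1,b} \cup R_{c-1,d}$ (with one rectangle vanishing and yielding a genuine rectangle if $b=1$ or $c=1$), each with the quantity ``$b+c$'' reduced by exactly one. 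By the inductive hypothesis (or the base case if we hit a rectangle), each is at most $2(b+c)-2$. To bound $\mu_{\text{line}}(\z)$, I will use Lemma~\ref{everytwo}: in any two-step window, at least one line is covered, so in particular within the first two steps some line becomes fully occupied, giving $\mu_{\text{line}}(\z) \leq 2$. Combining, $\M(\z) \leq 2 + (2(b+c)-2) = 2(b+c)$, closing the induction.

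\textbf{Lower bound $\M(\z) \geq 2\min\{b,c\}$.} By the symmetry of swapping rows and columns (which preserves $\M$), I may assume $b \leq c$. The plan is to exhibit an explicit spanning thin set, in the spirit of the construction in the proof of Proposition~\ref{rectmn}. A natural candidate is the thin set in standard arrangement determined by $\vec{r} = (a-1, a-2, \ldots, 2)$, $\vec{c} = (b, b-1, \ldots, 2)$, and $w=2$. Using the structural information on thin-set dynamics from Lemma~\ref{thingrowthprops}, I would track the growing Young-diagram region step by step and verify that spanning occurs in precisely $2b = 2\min\{b,c\}$ steps.

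\textbf{Main obstacle.} The routine piece is the upper bound; the delicate piece is the lower bound. The rectangle-style thin set is tailored for $R_{a,b}$, so under the L-shape $\z = R_{a,b} \cup R_{c,d}$ I must verify two things: first, that the chosen initial set does not get stuck at a proper Young-diagram fixed set of the $\z$-dynamics (since the extra rectangle $R_{c,d}$ makes growth harder), and second, that the presence of $R_{c,d}$ does not accidentally accelerate the spanning below $2b$ steps. Confirming the exact spanning time, as opposed to merely a lower-order bound, is the crux of the lower bound argument.
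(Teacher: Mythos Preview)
Your upper bound is correct and is exactly the paper's argument: induct on $b+c$, use Lemma~\ref{everytwo} to get $\mu_{\text{\rm line}}(\z)\le 2$, and apply Lemma~\ref{haHAA}.

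The lower bound, however, has a genuine gap, not merely an unverified step. Your thin set with $\vec r=(a-1,\dots,2)$, $\vec c=(b,\dots,2)$, $w=2$ need not span for $\z$ at all. Take $a=3$, $b=c=2$, $d=1000$; then your set is $A=\{(0,3),(0,4),(1,2),(2,1),(3,0),(4,0)\}$. In step~1 only $(0,0)$ joins; in step~2 row~$0$ is covered; thereafter every unoccupied site $(i,j)$ with $j\ge 1$ has row count at most~$1$ and column count at most~$3$, so the clause ``$\row\ge 2$ or $\col\ge 1000$'' coming from $R_{c,d}$ fails, and the dynamics halts at $A_2\ne\Z_+^2$. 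The obstacle you flagged (``the chosen initial set might get stuck'') is therefore fatal for this construction, not a detail to be checked later.

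The paper sidesteps this by never attacking $\z$ directly for the lower bound. Assuming $b\le c$, it splits into two cases. If $d-b>c$, Lemma~\ref{shifty} gives $\M(\z)\ge \M(\z^{\downarrow b})=\M(R_{c,\,d-b})=2c\ge 2b$ via Proposition~\ref{rectmn}. If $d-b\le c$, it first passes to $\z'=\z^{\leftarrow(c-b)}$, an L-shape whose narrow arm now has width exactly~$b$, and then uses a thin set with $\vec r=(n-1,\dots,2)$, $\vec c=(b,\dots,2)$, $w=2$ where $n=\max\{a-c+b,d\}$; this tailored choice does span for $\z'$ with $\tau(\z',A)\ge 2b$, and Lemma~\ref{shifty} transfers the bound back to $\z$. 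The missing ingredient in your plan is precisely this reduction via Lemma~\ref{shifty} to a shape where a rectangle-style thin set is guaranteed to span.
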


\begin{proof}
Without loss of generality assume $b \leq c$. By Lemma \ref{everytwo} at least one row or column is spanned in every two steps, and Lemma \ref{haHAA} gives us
\begin{align*}
\M(\z) \leq 2 + \max\left\{M\left(\z^{\leftarrow 1}\right), M\left(\z^{\downarrow 1}\right)\right\},
\end{align*}
and the upper bound follows by induction. 

For the lower bound, we consider two cases. If $d-b > c$, then by Lemma \ref{shifty}, $\M(\z) \geq \M(\z^{\downarrow b}) = 2c$.
Otherwise $d-b \leq c$. Let $\z' = \z^{\leftarrow c-b}$. Let $A$ be a thin set in the standard arrangement given by $\vr = (n-1,n-2,\ldots,2)$, $\vc = (b,b-1,\ldots,2)$, and $w=2$, where $n = \max\{a-c+b,d\}$. 
One can check that $\tau(\z',A) = 2b+1$, if $a-c+b \geq d$, and $\tau(\z',A) = 2b$, otherwise. 
Therefore by Lemma \ref{shifty}, $\M(\z) \geq \M(\z') \geq 2b$. 
\end{proof}

%%%%%%%%%%%%%%%%%%%%%%%%%%%%%%%%%%%%%%%%%%%%%%%%%%%%%%%%%%%%%%%%%%%%%%%%%%%%%%%%%%%%%%%%%%%%%%%%%%%%%%%%%%%%%%%%%%%%%%%%

\section{Enhanced growth vs. growth from thin sets} \label{enhthincomparison}

In this section we prove Theorem \ref{thin-enhanced}. 

\begin{lemma}\label{enhancedineq}
For any zero-set $\z$, 
\begin{align*}
\me\left(\z^{\swarrow 1} \right) \leq \Mth(\z).
\end{align*}
\end{lemma}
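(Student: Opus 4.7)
The plan is to construct, for each $(\vr,\vc)\in\Aen(\z^{\swarrow 1})$, a thin spanning set $A\in\cA_{\text{\rm th}}(\z)$ with $\tau(\z,A)\ge\ten(\z^{\swarrow 1},\vr,\vc)$; taking the supremum over $(\vr,\vc)$ then yields the lemma. I take $A$ to be the thin set in the standard arrangement with row-part vector $\vr'=(r_0+1,r_1+1,\dots,r_{N_0-1}+1)$, column-part vector $\vc'=(c_0+1,c_1+1,\dots,c_{M_0-1}+1)$, and $w$ chosen large enough that $A$ spans for $\z$. The ``$+1$'' shift mirrors the translation $\z\leftrightarrow\z^{\swarrow 1}$ also underlying Lemma~\ref{then2}: a direct inspection of the standard arrangement shows that $|A\cap L^h((i,j))|\le r_j+1$ and $|A\cap L^v((i,j))|\le c_i+1$ for every $(i,j)\notin A$, with equality in the row-part rows and the column-part columns and using $r_j=0$ or $c_i=0$ outside the respective supports.

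The core step is an inductive simulation comparing the thin-set dynamics from $A$ (governed by $\z$) with the enhanced dynamics from $\emptyset$ on $\z^{\swarrow 1}$. Writing $Y_t=A_t\setminus A$ for the ``growth part'' (a Young diagram by Lemma~\ref{thingrowthprops}) and $A_t^{\text{\rm en}}=\te^t(\emptyset)$, I will prove by induction on $t$ that $Y_t\subseteq A_t^{\text{\rm en}}$ as subsets of $\bZ_+^2$. The base case is trivial. For the step, consider $(i,j)\in Y_{t+1}\setminus Y_t$, which gives $(i,j)\notin A$ and $(\row((i,j),A_t),\col((i,j),A_t))\notin\z$. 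Splitting $A_t=A\cup Y_t$, the baseline bound above combined with the inductive hypothesis $Y_t\subseteq A_t^{\text{\rm en}}$ yields
\[
\row((i,j),A_t)\le r_j+1+|A_t^{\text{\rm en}}\cap L^h((i,j))|\quad\text{and}\quad \col((i,j),A_t)\le c_i+1+|A_t^{\text{\rm en}}\cap L^v((i,j))|.
\]
Since $\z$ is a Young diagram, the thin-set condition forces the larger pair on the right also to lie outside $\z$; by the equivalence $(u,v)\notin\z\iff(u-1,v-1)\notin\z^{\swarrow 1}$ this is exactly the enhanced growth condition for $(i,j)$ on $\z^{\swarrow 1}$ at time $t$. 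Hence $(i,j)\in A_{t+1}^{\text{\rm en}}$, closing the induction.

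To finish, fix any $t<\ten(\z^{\swarrow 1},\vr,\vc)$, so that $A_t^{\text{\rm en}}\subsetneq\bZ_+^2$. As $A_t^{\text{\rm en}}$ is a Young diagram by Lemma~\ref{enhgrowthprops}, its complement in $\bZ_+^2$ is infinite; since $A$ is finite there exists $(i,j)\in(\bZ_+^2\setminus A_t^{\text{\rm en}})\setminus A$. By the induction $(i,j)\notin Y_t$, hence $(i,j)\notin A_t=Y_t\cup A$, and the thin-set dynamics has not yet spanned at time $t$. This forces $\tau(\z,A)\ge\ten(\z^{\swarrow 1},\vr,\vc)$. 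The main technical obstacle I anticipate is verifying that $A$ actually spans for $\z$: the one-sided inclusion from the induction does not on its own force $Y_t$ to fill the bulk, so I plan to take $w$ large (in terms of the bounding rectangle of $\z$) so that once the bulk of the rectangle is covered the long occupied rows and columns cascade occupancy across $\bZ_+^2$.
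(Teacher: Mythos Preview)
Your approach is essentially the paper's: build a thin set $A$ in standard arrangement whose row and column counts are the given enhancements shifted by $+1$, and compare the thin-set dynamics with the enhanced dynamics by induction. Your one-sided inclusion $Y_t\subseteq A_t^{\text{\rm en}}$ is correctly argued and does give $\tau(\z,A)\ge\ten(\z^{\swarrow 1},\vr,\vc)$ \emph{provided} $A$ spans for $\z$.

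The gap you flag --- that $A$ spans --- is genuine, and ``take $w$ large'' is necessary but not sufficient as stated. Large $w$ ensures that the downward closure $A\cup X$ of $A$ contains $\z$, so that once $X$ is covered the dynamics finishes; but your inclusion says the thin-set dynamics is \emph{slower} than the enhanced one, which gives no reason $X$ is ever covered. What is missing is the \emph{reverse} comparison on $X$: for $(i,j)\in X$ every horizontal and vertical line through $(i,j)$ meets $A$, so in fact $|A\cap L^h((i,j))|=r_j+1$ and $|A\cap L^v((i,j))|=c_i+1$ with equality (not merely $\le$). Running the same induction in the opposite direction then gives $A_t^{\text{\rm en}}\cap X\subseteq Y_t$, hence equality on $X$; since the enhanced dynamics spans, $X$ is eventually covered and $A$ spans. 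The paper avoids splitting into two inclusions by proving a single equality $A_t\cap S=B_t\cap S$ on a large square $S$ (comparing against the enhanced dynamics on $\z$ with truncated ``$+1$'' enhancements), which simultaneously delivers the time bound and the spanning.

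One minor correction: $Y_t=A_t\setminus A$ is not in general a Young diagram; Lemma~\ref{thingrowthprops} only gives $A_t=A\cup Y$ for some Young diagram $Y$, which may overlap $A$. Your argument does not actually use that claim, so this is harmless.
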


\begin{proof} 
Let $(\Row,\Col)$ be some enhancements that span for $\z^{\swarrow 1}$, given respectively by infinite vectors $\vec{r} = (r_0,r_1,\ldots)$ and 
$\vec{c} = (c_0,c_1,\ldots)$ of respective finite supports $[0,N_{0}-1]$ and $[0,M_0-1]$. 
Form infinite vectors $\vec{r} \, ^+ = (r_i^+ : i \geq 0) = (r_0+1,r_1+1,\ldots,r_{N_0-1}+1,1,\ldots)$ and 
$\vec{c}\, ^+ = (c_j^+: j \geq 0) = (c_0+1,c_1+1,\ldots,c_{M_0-1}+1,1,\ldots)$. 

Observe that the enhancements $\vec{r}\,^+,\vec{c}\,^+$ span for $\z$. In fact, the enhanced dynamics with zero-set $\z^{\swarrow 1}$ and 
enhancements $\vec{r},\vec{c}$ and the enhanced dynamics with zero-set $\z$ and enhancements $\vec{r}\,^+,\vec{c}\,^+$ 
have the same occupied set $B_t^+$ at any time $t \geq 0$. It is important to note that when $B_t^+$ covers the rectangle 
$R_{M_0+1,N_0+1}$, it in fact fully occupies $\Z_+^2$. 

Choose $N$ large enough so that the square $S = R_{N,N}$ satisfies $R_{M_0+1,N_0+1} \subseteq S$ and $\z \subseteq S$. 
For $i < N$ let $r_i' = r_i^+$ and $c_i' = c_i^+$, and for $i \geq N$ let $r_i' = c_i' = 0$. 
Let $B_t$ be the set of occupied sites at time $t$ under the enhanced dynamics given by $(\z,\vec{r}\,',\vec{c}\,')$. 
By Lemma \ref{enhgrowthprops}, $B_t \cap S = B_t^+ \cap S$ for all $t \geq 0$ and therefore the enhancements $\vec{r}\,',\vec{c}\,'$ span for $\z$. 

Define a thin set $A$ in the standard arrangement given by the row vector $(r_0',\ldots,r_{N_0-1}')$, the column vector $(c_0',\ldots,c_{M_0-1}')$, and $w = N - N_0 + N - M_0 \geq 2$.
Observe that $S \cap A = \emptyset$. Let $A_t$ be the set of occupied sites at time $t$ starting from $A$ under the regular dynamics given by $\z$. 

We claim that $A_t \cap S = B_t \cap S$ for all $t \geq 0$. We use induction to prove this claim, which clearly holds at $t = 0$. 
Assume that the claim holds for some $t \geq 0$. Let $x \in S \setminus A_t = S \setminus B_t$. 
By Lemma \ref{thingrowthprops}, $\mathcal{N}(x) \cap A_t \cap A^c = \mathcal{N}(x) \cap (A_t \cap S)$ and again by 
Lemma \ref{enhgrowthprops}, $\mathcal{N}(x) \cap B_t = \mathcal{N}(x) \cap (B_t \cap S)$. 
Therefore $x \in A_{t+1}$ if and only if $x \in B_{t+1}$ which proves the induction step. 

When the enhanced dynamics given by $(\z,\vec{r}\,',\vec{c}\,')$ covers $S$, the enhanced dynamics given by $(\z,\vec{r}\,^+,\vec{c}\,^+)$ also covers $S$, and thus fully occupies $\Z_+^2$. 
Therefore, by the claim in the previous paragraph, $\ten(\z,\vec{r}\,^+,\vec{c}\,^+) \leq \tau(\z,A) \leq \Mth(\z)$, 
and then $\ten(\z^{\swarrow 1},\vec{r},\vec{c}) = \ten(\z,\vec{r}\,^+,\vec{c}\,^+) \leq \Mth(\z)$. 
As $\vec{r}$ and $\vec{c}$ are arbitrary enhancements that span for $\z^{\swarrow 1}$, the proof is concluded. 
\end{proof}

\begin{lemma}\label{youngspantime}
For any Young diagram $Y\in \mathcal{A}$, $\tau(\z,Y) \leq \me(\z)$.
\end{lemma}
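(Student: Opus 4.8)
The plan is to show that any Young diagram $Y$ that spans for $\z$ can be simulated, as far as the occupied set inside a large square is concerned, by an enhanced dynamics that spans for $\z$, and hence takes at most $\me(\z)$ steps. The key observation is that a Young diagram is, in particular, a thin-set-free configuration whose row and column counts are already weakly decreasing and ``nested'' in exactly the way enhancements encode: if $Y$ is a Young diagram, then for a site $x=(i,j)\notin Y$, the counts $\row(x,Y)$ and $\col(x,Y)$ depend only on $j$ and $i$ respectively, precisely because the occupied portion of row $j$ is an initial segment $[0,\row((0,j),Y)-1]$ and similarly for columns. So set $r_j = \row((0,j),Y)$ and $c_i = \col((i,0),Y)$; these are weakly decreasing with finite support, and they form a legitimate pair of enhancements $(\vr,\vc)$.

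Next I would argue that the regular dynamics started from $Y$ and the enhanced dynamics with enhancements $(\vr,\vc)$ started from $\emptyset$ agree, on a suitable large square $S$, at every time step. Concretely, pick $N$ large enough that $\z\subseteq R_{N,N}=:S$ and $S$ contains the bounding rectangle of $Y$. By Lemma \ref{thingrowthprops} (applied with the trivial thin set, i.e. $Y$ in place of the thin set and using that $Y_t$ is a Young diagram), the newly occupied sites of $A_t=\T^t(Y)$ outside $Y$ form a Young diagram, and the row/column counts a new site $x\in S$ sees from $A_t$ split as the contribution from $Y$ (which is exactly $r_j$ or $c_i$, as long as $x\notin Y$, which holds for $x\in S$ outside $Y$) plus the contribution from the grown Young-diagram part inside $S$. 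The enhanced dynamics by construction adds $r_j,c_i$ to the counts of the grown part. Thus an induction on $t$ shows $A_t\cap S = B_t\cap S$, where $B_t=\te^t(\emptyset)$: the inductive step is the observation that for $x\in S\setminus A_t = S\setminus B_t$, we have $(\row(x,A_t),\col(x,A_t)) = (\row'(x)+r_j,\col'(x)+c_i)$ where $\row',\col'$ are the counts from $B_t\cap S$, so $x\in A_{t+1}\iff x\in B_{t+1}$. This is essentially the same bookkeeping as in the proof of Lemma \ref{enhancedineq}.

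Finally, since $Y$ spans for $\z$, the regular dynamics covers $S$ at some finite time, hence so does the enhanced dynamics; but once the enhanced dynamics covers the bounding rectangle of $\z$ (which is inside $S$), Lemma \ref{gridsumspan} together with Lemma \ref{enhgrowthprops}\eqref{enhconcave} forces it to fill all of $\bZ_+^2$ — indeed $(\vr,\vc)$ spans for $\z$ precisely because $\z\subseteq \Row\boxplus\Col$, which one checks directly from $r_j=\row((0,j),Y)$, $c_i=\col((i,0),Y)$ and the fact that the regular dynamics from $Y$ does span. Therefore $\tau(\z,Y)=\ten(\z,\vr,\vc)\le\me(\z)$, as claimed.

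The main obstacle I anticipate is making the count-splitting rigorous: one must verify carefully that for every $t$ and every empty site $x$ in $S$, the occupied sites of $A_t$ on the horizontal (resp. vertical) line through $x$ are exactly those of $Y$ on that line together with those of the grown Young-diagram part, with \emph{no overlap} — i.e. that the grown part never re-occupies a site of $Y$, which is automatic, and that the ``$Y$-part'' of the count really equals $r_j$ and not something smaller because $x$ might lie to the left of the initial segment. Since $x\in S$ is empty and $Y\subseteq S$, and the occupied part of row $j$ in $Y$ is an initial segment, this is fine, but it is the step where one has to be attentive to indices. The verification that $(\vr,\vc)$ genuinely spans (so that $\me(\z)$ is an upper bound rather than vacuous) should be packaged via Lemma \ref{gridsumspan} once the simulation is established.
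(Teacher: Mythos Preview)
Your simulation argument has a genuine gap: the inductive claim $A_t\cap S=B_t\cap S$ is false for $t\ge 2$. The problem is double counting. At time $1$ one does have $A_1=B_1$ (and in fact $Y\subseteq B_1$, using $\z\subseteq Y$). But once $Y\subseteq B_1$, at every subsequent step the enhanced dynamics counts the sites of $Y$ \emph{twice}: they already sit inside $B_t$ and contribute to $\row(x,B_t)$, and then the enhancement adds $r_j$ (the $Y$-row-count) on top. Your splitting $\row(x,A_t)=r_j+\row'(x)$ with $\row'$ taken from $B_t\cap S$ is therefore off by exactly this overlap; the honest identity is $\row(x,A_t)=\row(x,B_t)$ when $A_t=B_t$, not $\row(x,B_t)+r_j$. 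Concretely, take $\z=Y$ to be the staircase with row lengths $5,4,3,2,1$: then $A_1=B_1$, but at $t=2$ the site $(i,j)=(5,3)$ has $(\alpha_j,\beta_i)=(3,1)\in\z$ while $(\alpha_j+r_j,\beta_i+c_i)=(5,1)\notin\z$, so $(5,3)\in B_2\setminus A_2$. In general one only gets $A_t\subseteq B_t$, hence $\ten(\z,\vr,\vc)\le\tau(\z,Y)$ --- the wrong direction for what you want.

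The paper's argument sidesteps this entirely. It first invokes the characterization (Lemma~3.1 of \cite{GSS}) that a Young diagram $Y$ spans iff $\z\subseteq Y$, so by monotonicity of $\cT$ one reduces to $Y=\z$. Then it compares the regular dynamics from $\z$ with the \emph{one-sided} enhanced dynamics $(\Row,\Col)=(\z,\emptyset)$: here $B_t$ is a union of full rows, and one checks $B_t\subseteq A_t$ (each row of $\z$ contributes to the row count, the covered rows contribute to the column count, with no overlap), giving $\tau(\z,\z)\le\ten(\z,\z,\emptyset)\le\me(\z)$. Your approach can in fact be repaired along these lines by taking $(\Row,\Col)=(Y,\emptyset)$ instead of $(Y,Y)$; the two-sided choice is what breaks it.
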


\begin{proof}
The enhanced spanning time for the pair of enhancements $(\z,\emptyset)$ is given by the number of distinct row counts (including 0) in $\z$.  We also have $\tau(\z,\z) \leq \te(\z,\z,\emptyset)\leq \me(\z)$.  By Lemma 3.1 in \cite{GSS}, the Young diagram $Y \in \mathcal{A}$ if and only if $\z\subseteq Y$.  Therefore by monotonicity $\tau(\z,Y) \leq \tau(\z,\z) \leq \me(\z)$	
\end{proof}

\begin{lemma} \label{thinUbound}	
For any $\z$, $\Mth(\z) \leq 2\me(\z).$
\end{lemma}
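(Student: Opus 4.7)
The plan is to bound $\tau(\z, A)$ for every thin spanning set $A$ by $2\me(\z)$ via a direct comparison with enhanced dynamics at a factor-of-two slowdown. Fix $A \in \cA_{\text{\rm th}}$ in the standard arrangement with $\tau(\z, A) = \Mth(\z)$, and set $\vec{r}^*_j = \row((0, j), A)$ and $\vec{c}^*_i = \col((i, 0), A)$. By the standard arrangement these are weakly decreasing sequences of finite support, corresponding to Young diagrams $\cR^*, \cC^*$.

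First, I want to verify that $(\cR^*, \cC^*)$ spans for $\z$, which by Lemma \ref{gridsumspan} amounts to $\z \subseteq \cR^* \boxplus \cC^*$. If the containment failed, I could locate a site $(i, j)$ whose entire upward-right quadrant is never activated by the regular dynamics from $A$, contradicting the assumption that $A$ spans. Hence $\ten(\z, \vec{r}^*, \vec{c}^*) \leq \me(\z)$, and the enhanced dynamics $B_t = \te^t(\emptyset)$ with these enhancements reaches $\Z_+^2$ by time $\me(\z)$.

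The main technical work is the inclusion $B_t \subseteq A_{2t}$ for all $t \geq 0$, proved by induction. The base case is trivial. In the induction step, a new site $(i, j) \in B_t \setminus B_{t-1}$ satisfies the enhanced condition $(r^*_j + \row((i, j), B_{t-1}), \, c^*_i + \col((i, j), B_{t-1})) \notin \z$. Combining the hypothesis $B_{t-1} \subseteq A_{2(t-1)}$ with $A \subseteq A_{2(t-1)}$, the row and column counts of $(i, j)$ in $A_{2(t-1)}$ equal the enhanced expressions minus the overlap sizes $|A \cap B_{t-1} \cap L^h((i, j))|$ and $|A \cap B_{t-1} \cap L^v((i, j))|$. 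The intermediate regular step at time $2t - 1$ is then used to activate enough sites in $L^h((i, j)) \cup L^v((i, j))$ to compensate for these overlaps, so that the regular activation condition at $(i, j)$ is met at time $2t$. This argument relies on Lemma \ref{thingrowthprops}, which ensures that regular dynamics from a thin set preserves a ``Young plus $A$'' structure with monotone row and column counts.

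Concluding, $A_{2\me(\z)} \supseteq B_{\me(\z)} = \Z_+^2$ yields $\tau(\z, A) \leq 2\me(\z)$, and taking the supremum over thin sets gives $\Mth(\z) \leq 2\me(\z)$. The main obstacle is the overlap analysis in the induction step: precisely identifying the compensating sites activated in the intermediate regular step to offset $|A \cap B_{t-1}|$-sized deficits. The factor of $2$ in the bound is intrinsic to this overlap — each enhancement in $(\cR^*, \cC^*)$ ``pre-loads'' the row and column counts of $A$, whereas the regular dynamics must reconstruct these counts through an extra round of activations.
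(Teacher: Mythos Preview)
Your proposal has a genuine gap at the inductive step. You assert that ``the intermediate regular step at time $2t-1$ is then used to activate enough sites in $L^h((i,j)) \cup L^v((i,j))$ to compensate for these overlaps,'' and you yourself flag this as ``the main obstacle'' --- but you never carry it out. There is no mechanism offered for why one extra step of the regular dynamics must always produce, in the correct row and column, at least $|A\cap B_{t-1}\cap L^h((i,j))|$ and $|A\cap B_{t-1}\cap L^v((i,j))|$ new occupied sites. Your inductive hypothesis $B_{t-1}\subseteq A_{2(t-1)}$ is too coarse to force this: it gives only a lower bound on counts in $A_{2(t-1)}$, and nothing about what $\cT$ does to those counts in one step. (There is also a smaller gap: your argument that $(\cR^*,\cC^*)$ spans for $\z$ appeals to the regular dynamics getting stuck, which presupposes a comparison in the direction you are trying to prove; the paper instead cites an external result for this.)

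The paper's proof avoids the overlap problem entirely by a two--phase decomposition rather than a step-by-step race. It introduces the region $X$ of sites strictly below $A$ (so $A\cap X=\emptyset$), and shows by induction that $A_t\cap X = B_t\cap X$ for \emph{all} $t$ --- an equality, with no factor of two. The point is that for $x\in X$, the neighbors of $x$ split cleanly as $\Ne(x)\cap A_t = (\Ne(x)\cap A)\,\dot\cup\,(\Ne(x)\cap B_t)$, so the regular counts are exactly the enhanced counts and there is no overlap to compensate. This bounds the time to cover $X$ by $\me(\z)$. Once $X$ is covered, $A_\ell\supseteq A\cup X$ is a spanning Young diagram, and a separate short argument (Lemma~\ref{youngspantime}) shows that any spanning Young diagram spans in at most $\me(\z)$ further steps. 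Summing the two phases gives $2\me(\z)$. If you want to salvage your approach, you would need either a much stronger inductive invariant than $B_t\subseteq A_{2t}$, or to localize to a region where $A$ and $B_{t-1}$ are disjoint --- which is precisely what the paper does.
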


\begin{proof}  
Let $A\in \mathcal{A}$ be a thin set in the standard arrangement.  Let $X$ be the set of points $x\in  \mathbb{Z}_+^2\setminus A$ such that $x\preceq x'$ for some $x'\in A$, that is, $X$ is the set of points strictly below $A$ (see Figure \ref{FigSALemma}).  
Let $\ell = \inf \{t\geq 0: X\subseteq A_t\}$ be the first time at which the regular dynamics starting from $A$ covers $X$.

In order to obtain an upper bound on $\ell$ we consider enhanced growth with enhancements $\Row$ and $\Col$ that are given by the row and column counts of $A$.  
Thus, for any $x=(i,j)\in \mathbb{Z}_+^2$ the row and column enhancements for $x$ are given by $r_j= \row(x,A)$ and $c_i=\col(x,A)$.  The pair $(\Row,\Col)$ spans for $\z$ under the enhanced dynamics as the proof for the two-Y construction in Lemma 3.3 of \cite{GSS} applies.  

For $t\geq 0$, let $B_t$ denote the set of occupied sites under enhanced growth given by $(\z,\Row,\Col).$  First we claim that, for $t\geq 0$, $A_t\cap X = B_t \cap X$.  We proceed by induction.  
The claim is true for $t=0$ as $A_0 \cap X = \emptyset = B_0\cap X$.  Suppose the claim is true for some $t\geq 0$.  
By Lemma \ref{enhgrowthprops} \eqref{enhyoung} and Lemma \ref{thingrowthprops} \eqref{shapetype}, for $x\in X\setminus A_t = X\setminus B_t,$
$$ 
\Ne(x) \cap A_t \cap A^c = \Ne(x) \cap B_t.	
$$
Therefore $\row(x,A_t) = \row(x,B_t) + \row(x,A)$ and the analogous equality holds for the column counts.  It follows that $A_{t+1} \cap X = B_{t+1} \cap X,$ which establishes the inductive claim. 

By the inductive claim $\ell \leq \me(\z)$.  Now, $A\cup X$ is a Young diagram that includes $A$, and therefore spans, and is also covered by $A_\ell$.  
By Lemma \ref{youngspantime}, $\tau(\z, A_\ell) \leq \me(\z)$ and therefore $\tau(\z,A) \leq \ell + \me(\z) \leq 2\me(\z)$.   
\end{proof}

%%%%%%%%%%%%%%%

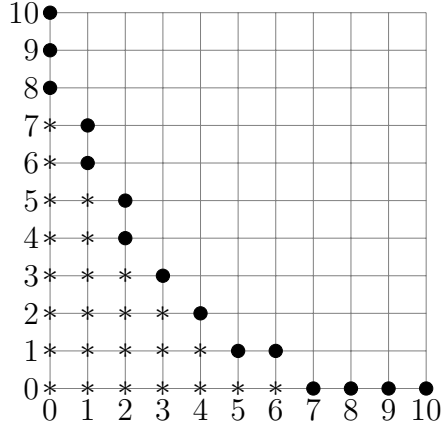
\begin{figure}[!t]
\centering

\begin{tikzpicture}[scale=0.50]
	
	\draw[step=1cm,gray,very thin] (0,0) grid (10,10);
	\foreach \x in {0,1,2,3,4,5,6,7,8,9,10}
		\draw (\x cm, 0pt) node (\x cm,0pt) [anchor=north] {$\x$};
	\foreach \y in {0,1,2,3,4,5,6,7,8,9,10}
		\draw (0pt,\y cm) -- (0pt,\y cm) node [anchor=east] {$\y$};
		
	\foreach \x in {7,8,9,10}
		\filldraw[black] (\x,0) circle (5pt);
	\foreach \x in {5,6}
		\filldraw[black] (\x,1) circle (5pt);
	
	\filldraw[black] (3,3) circle (5pt);
	\filldraw[black] (4,2) circle (5pt);
	
	\foreach \y in {8,9,10}
		\filldraw[black] (0,\y) circle (5pt);
	\foreach \y in {6,7}
		\filldraw[black] (1,\y) circle (5pt);
	\foreach \y in {4,5}
		\filldraw[black] (2,\y) circle (5pt);

	\foreach \x in {0,1,2,3,4,5,6}
		\draw[black] (\x,0) node {$\ast$};
	\foreach \x in {0,1,2,3,4}
		\draw[black] (\x,1) node {$\ast$};
	\foreach \x in {0,1,2,3}
		\draw[black] (\x,2) node {$\ast$};
	\foreach \x in {0,1,2}
		\draw[black] (\x,3) node {$\ast$};
	\foreach \x in {0,1}
		\draw[black] (\x,4) node {$\ast$};
	\foreach \x in {0,1}
		\draw[black] (\x,5) node {$\ast$};
	\foreach \y in {6,7}
		\draw[black] (0,\y) node {$\ast$};
			
\end{tikzpicture}

\caption{Illustration of the proof of Lemma \ref{thinUbound}. The thin set $A$ is marked by solid circles, and the set $X$ by asterisks.} \label{FigSALemma}
%\vspace{-0.5cm}
\end{figure}

\begin{proof}[Proof of Theorem \ref{thin-enhanced}]
The upper bound on $\Mth$ follows from Lemma \ref{thinUbound}, and the lower bound follows from Lemmas \ref{then2} and \ref{enhancedineq}. 
\end{proof}

%%% Upper Bounds %%%

\section{Upper bounds}\label{UpperBound}

In this section we prove Theorem \ref{generalUB} and Theorem \ref{enhancedUB}. 

\begin{lemma}\label{crosspoint}
Let $A$ be an initial set of occupied points and $A_t$ the resulting set of occupied points at time $t$ in the regular dynamics with zero-set $\z$. 
Assume that $x \in A_t \setminus A$ and $y \in A_t \setminus A$ are not neighbors. Let $w$ be the sole element of $L^h(x)\cap L^v(y)$ and $z$ the sole element of $L^v(x)\cap L^h(y)$.  
Then either $w \in A_t$ or $z \in A_t$.  
\end{lemma}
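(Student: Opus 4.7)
The plan is to argue by contradiction: assume $w \notin A_t$ and $z \notin A_t$ and derive inconsistent inequalities from the Young-diagram property of $\z$.

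Let $s_x$ and $s_y$ denote the earliest times at which $x$ and $y$ are occupied; both are at least $1$ since $x, y \notin A$. Swapping $x$ and $y$ interchanges $w$ and $z$ but preserves the conclusion, so without loss of generality $s_x \ge s_y$. Set
\[
(r, c) := (\row(x, A_{s_x - 1}), \col(x, A_{s_x - 1})), \qquad (r_y, c_y) := (\row(y, A_{s_x - 1}), \col(y, A_{s_x - 1})).
\]
Because $x$ is added at step $s_x$, $(r, c) \notin \z$. Because $w$ and $z$ are absent from both $A_{s_x - 1}$ and $A_{s_x}$, the transition rule for $\T$ forces their count pairs at time $s_x - 1$ to lie in $\z$; since $w$ shares its row with $x$ and its column with $y$, its pair is $(r, c_y)$, and the analogous pair for $z$ is $(r_y, c)$. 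Combining $(r, c) \notin \z$ with $(r, c_y), (r_y, c) \in \z$ via the downward closure of $\z$ yields $c > c_y$ and $r > r_y$.

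If $s_x = s_y$, then $y$ is itself freshly added at step $s_x$, so $(r_y, c_y) \notin \z$; combined with $(r_y, c) \in \z$, downward closure now forces $c < c_y$, contradicting $c > c_y$.

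The asymmetric case $s_x > s_y$ needs one additional step to see $(r_y, c_y) \notin \z$. I would let $(r', c') := (\row(y, A_{s_y - 1}), \col(y, A_{s_y - 1})) \notin \z$ be the pair witnessing $y$'s addition at step $s_y$. Since $y$ lies on its own row and column, $\row(y, A_{s_y}) \ge r' + 1$ and $\col(y, A_{s_y}) \ge c' + 1$; monotonicity of the counts in time then gives $r_y \ge r' + 1$ and $c_y \ge c' + 1$, so $(r_y, c_y) \succeq (r', c')$. Were $(r_y, c_y) \in \z$, downward closure would place $(r', c') \in \z$, contradicting the choice of $(r', c')$. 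Hence $(r_y, c_y) \notin \z$, and comparison with $(r_y, c) \in \z$ again forces $c < c_y$, the desired contradiction. The main obstacle is exactly this asymmetric case, where the counts of $y$ at time $s_x - 1$ are not a priori outside $\z$ and one must reach back to $y$'s own addition to manufacture a witness pair weakly below $(r_y, c_y)$ that lies outside $\z$.
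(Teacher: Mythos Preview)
Your proof is correct and rests on the same combinatorial mechanism as the paper's: compare the count pairs of $x$, $y$, $w$, $z$ at one fixed time and invoke downward closure of $\z$. The paper's version is shorter because it works at time $t-1$ throughout rather than at the first-occupation time $s_x-1$. The observation you extract in your ``asymmetric case'' --- that if $y$ was added at some earlier step then its count pair at any later time still lies outside $\z$ by monotonicity --- is exactly what lets the paper assert $(\row(y,A_{t-1}),\col(y,A_{t-1}))\notin\z$ from the outset, eliminating your case split entirely. With both $x$ and $y$ having count pairs outside $\z$ at time $t-1$, the paper finishes by a direct case analysis on the coordinatewise comparison of these two pairs rather than by contradiction, but the underlying inequalities are the same as yours.
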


\begin{proof}
Let $(u_x,v_x)$ and $(u_y,v_y)$ denote the row and column counts of $x$ and $y$, respectively, in $A_{t-1}$. Since $x$ and $y$ are both occupied by time $t$, $(u_x,v_x) \notin \z$ and $(u_y,v_y) \notin \z$. 
The pair of row and column counts of $w$ (resp., $z$) in $A_{t-1}$ is given by $(u_x,v_y)$ (resp., $(u_y,v_x)$). 

Assume $u_x \leq u_y$ and $v_x \leq v_y$. Then $(u_x,v_y) \notin \z$ and $(u_y,v_x) \notin \z$, and consequently $w$ and $z$ are both occupied at time $t$. The same conclusion holds if $u_y \leq u_x$ and $v_y \leq v_x$. 
Otherwise suppose without loss of generality that $u_x \leq u_y$ and $v_y \leq v_x$. Then we claim $(u_y,v_x) \notin \z$. If not, then $(u_x,v_x) \in \z$ and $(u_y,v_y) \in \z$, a contradiction. Thus $z \in A_t$. 
\end{proof}

\begin{proof}[Proof of Theorem \ref{generalUB}]
Suppose that $\z \subseteq R_{m,n}$. Fix a set $A$ of initially occupied sites that spans.  
Also fix a time $k>0$ to be specified later. Suppose that $A_k\backslash A_{k-1}\neq \emptyset$ so that $\M(\z) \geq k$.  

For $i>0$, let $B_i = A_i\backslash A_{i-1}$ be the set of sites that become occupied at time $i$.  A horizontal line with at least $m$ sites in $A_i$ or a vertical line with at least $n$ sites in $A_i$ is said to be \emph{saturated} at time $i$.  
Let $I$ denote the set of times $i\in [1,k]$ such that no line becomes saturated or covered at time $i$.  We claim that $|[1,k]\setminus I| \leq 2m+2n-1.$  

To prove the claim, we make a few observations. If a horizontal or a vertical line is saturated at time $i$, then it becomes covered by time $i+1$.  
Once $n+m-1$ lines become saturated (resp., covered), at least $n$ horizontal lines or at least $m$ vertical lines will be saturated (resp., covered). 
If at least $n+m-1$ lines are saturated at time $i$, then at least $n+m-1$ lines are covered by time $i+1$ and all of $\Z_+^2$ is occupied by time $i+2.$  
At time $k-1$, $\Z_+^2$ is not covered, so there are at most $2(n+m-2)$ times in $[1,k-3]$ in which a line becomes either saturated or covered, which implies the claim.    

Let $B = \cup_{i\in I} B_i$.  If follows from the claim that 
\begin{equation}\label{generalUBeq0}
|B| \geq |I| \geq k-2m-2n + 1.
\end{equation}

On any given horizontal (resp., vertical) line, there are less than $m$ (resp., $n$) sites in $B$; otherwise, that line would become saturated by the last point added.  
Fix a $z\in B$ and suppose neither $L^h(z)$ nor $L^v(z)$ is saturated by time $k$.  Define the sets 
$$B_h = \left(L^h(z)\cap B\right ) \bigcup \{ z' \in B\backslash \Ne(z): L^h(z)\cap L^v(z') \in A_k \}$$
and 
$$B_v = B\backslash B_h.$$
The set $B_v$ consists of sites $z'$ that satisfy one of the following two properties: 
either $z' \in L^v(z) \backslash\{z\}$; or both $z' \in B \backslash \Ne(z)$ and $L^h(z)\cap L^v(z') \notin A_k$, in which case   
by Lemma \ref{crosspoint}, $L^v(z) \cap L^h(z') \in A_k$. 
Therefore $|\pi_x(B_h)| \leq |L^h(z) \cap A_k|$  and $|\pi_y(B_v)| \leq |L^v(z) \cap A_k|.$

For any subset $B'\subseteq B$, we have 
$$|B'| \leq (n-1)|\pi_x(B')|, \qquad |B'| \leq (m-1)|\pi_y(B')|.$$  
Therefore, the inequalities 
\begin{equation}\label{generalUBeq1}
|\pi_x(B_h)| \leq \frac{k-2m-2n}{2(n-1)}, \qquad |\pi_y(B_v)| \leq \frac{k-2m-2n}{2(m-1)}
\end{equation}
cannot both be satisfied, as otherwise
$$|B| = |B_h| + |B_v| \leq (n-1)|\pi_x(B_h)| + (m-1)|\pi_y(B_v)| \leq k-2m-2n.$$

Assume $k \geq 2(m-1)(n-1) + 2m + 2n$. For any $z\in B$, we claim a line through $z$ is covered at time $k+1$. If either $L^h(z)$ or $L^v(z)$ is saturated the claim is true.  
Otherwise one of the inequalities in \eqref{generalUBeq1} is not satisfied, and then either $|\pi_x(B_h)| \geq m$ or $|\pi_y(B_v)| \geq n$.  
In the former case $|L^h(z)\cap A_k| \geq |\pi_x(B_h)| \geq m$ and $L^h(z)$ is covered by time $k+1$. 
In the latter case $|L^v(z)\cap A_k| \geq |\pi_y(B_v)| \geq n$ and $L^v(z)$ is covered by time $k+1$.  

Let $H$ (resp., $V$) be the set of all $z\in B$ such that $L^h(z)$ (resp., $L^v(z)$) is covered at time $k+1$. 
By the previous paragraph, $B = H \cup V$. 
Let $k = 2(m-1)(n-1) + 2m + 2n + 1 = 2mn + 3$. 
For this choice of $k$, $|B|\geq 2(m-1)(n-1) + 2$ by \eqref{generalUBeq0}, 
and therefore either $|H| \geq (m-1)(n-1)+1$ or $|V| \geq (m-1)(n-1)+1$.  
As a horizontal line can contain at most $m-1$ sites in $H$, $|H| \geq (m-1)(n-1)+1$ implies that there are at least $n$ horizontal lines with sites in $H$ which are covered by time $k+1$. 
Similarly, as a vertical line can contain at most $n-1$ sites in $V$, $|V|\geq (m-1)(n-1)+1$ implies that there are at least $m$ vertical lines with sites in $V$ which are covered by time $k+1$. 
In either case, $\Z_+^2$ is fully occupied by time $k+2 = 2mn + 5$.     
\end{proof}

We now proceed to prove the better upper bound for enhanced growth. 
Let $\widetilde{\mathcal{A}}_{\rm en}$ consist of pairs $(\Row,\Col) \in \Aen$ such that $\Row \subseteq \z$ and $\Col \subseteq \z$. 

\begin{lemma}\label{containment}
For any zero-set $\z$, 
\begin{equation*}
\me(\z) = \sup\{\ten(\z,\Row,\Col) : (\Row,\Col) \in \widetilde{\mathcal{A}}_{\rm en}\}	.
\end{equation*}
\end{lemma}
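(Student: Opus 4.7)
The plan is to prove the nontrivial direction $\me(\z) \le \sup\{\ten(\z,\Row,\Col) : (\Row,\Col) \in \widetilde{\mathcal{A}}_{\rm en}\}$, since the reverse inequality follows immediately from $\widetilde{\mathcal{A}}_{\rm en} \subseteq \mathcal{A}_{\rm en}$. Given an arbitrary pair $(\Row,\Col) \in \mathcal{A}_{\rm en}$, I will show that the truncated pair $(\Row \cap \z,\; \Col \cap \z)$ lies in $\widetilde{\mathcal{A}}_{\rm en}$ and has enhanced spanning time at least $\ten(\z,\Row,\Col)$. Because an intersection of two Young diagrams is a Young diagram, both components are again Young diagrams with (automatically) weakly decreasing row and column counts and with finite support; they are by construction contained in $\z$.

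To verify that the truncated pair still spans, I would apply Lemma \ref{gridsumspan} and check $\z \subseteq (\Row \cap \z) \boxplus (\Col \cap \z)$. Fix $(a,b) \in \z$; since $(\Row,\Col)$ spans, the same lemma gives $(a,b) \in \Row \boxplus \Col$, meaning that for every decomposition $(a,b) = (i,b') + (a',j)$ with nonnegative integer coordinates, either $(i,b') \in \Row$ or $(a',j) \in \Col$. But $(i,b') \preceq (a,b)$ and $(a',j) \preceq (a,b)$, and $\z$ is downward closed, so both summands lie in $\z$; hence whichever of them lies in $\Row$ or in $\Col$ actually lies in $\Row \cap \z$ or in $\Col \cap \z$, as required.

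Next I would compare the two enhanced dynamics by a direct monotonicity argument. Let $A_t$ and $A_t'$ denote the occupied sets at time $t$ under $(\z,\Row,\Col)$ and $(\z,\Row \cap \z,\Col \cap \z)$ respectively, and let $\vec{r}'$, $\vec{c}'$ denote the truncated enhancement vectors, which satisfy $r_j' \le r_j$ and $c_i' \le c_i$ for all $i,j$. I claim $A_t' \subseteq A_t$ for every $t$, by induction on $t$: the base case is trivial, and if $x = (i,j) \in A_{t+1}' \setminus A_t'$, then
\[
(\row(x,A_t') + r_j',\; \col(x,A_t') + c_i') \notin \z,
\]
and by the inductive hypothesis combined with the inequalities on the enhancement vectors, this point is $\preceq (\row(x,A_t) + r_j,\; \col(x,A_t) + c_i)$. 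Downward-closedness of $\z$ places the latter point outside $\z$ as well, so $x \in A_{t+1}$.

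Combining the two steps yields $\ten(\z, \Row \cap \z, \Col \cap \z) \ge \ten(\z, \Row, \Col)$, and taking the supremum over $(\Row,\Col) \in \mathcal{A}_{\rm en}$ finishes the proof. The main thing to be careful about is the spanning check, but the Young diagram structure of $\z$ together with the $\boxplus$ reformulation from Lemma \ref{gridsumspan} makes it essentially automatic; the domination step is a routine induction exploiting only that $\z$ is downward closed.
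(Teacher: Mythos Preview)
Your proof is correct, and it takes a somewhat different route from the paper's. The paper constructs the smaller pair via the infimal difference: given $(\Row,\Col)\in\Aen$, it sets $\Row'=\z\boxminus\Col$ and $\Col'=\z\boxminus\Row'$, then invokes Lemma~\ref{griddiffprops} to get $\Row',\Col'\subseteq\z$, spanning, and contained in $\Row,\Col$ respectively. You instead take $\Row'=\Row\cap\z$ and $\Col'=\Col\cap\z$ and verify the spanning condition directly via the characterization of $\boxplus$ from Lemma~\ref{gridsumspan}, using only that every summand in a decomposition of a point of $\z$ is again in $\z$. Your approach is more elementary---it bypasses the $\boxminus$ machinery entirely---while the paper's construction yields a pair that is in fact minimal with respect to inclusion, a stronger property not needed here. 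The monotonicity step (smaller enhancements give slower dynamics) is handled the same way in spirit; the paper just asserts it, while you spell out the one-line induction.
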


\begin{proof}
Fix $(\Row,\Col)\in \mathcal{A}$ and let $\Row' = \z \boxminus \Col$ and $\Col' = \z \boxminus \Row'$. By Lemma \ref{griddiffprops}, $(\Row',\Col')$ span for $\z$, and $(\Row',\Col') \in \widetilde{\mathcal{A}}_{\rm en}$. 
Moreover, $\Row'\subseteq \Row$ and $\Col'\subseteq \Col$, and so $\ten(\z,\Row',\Col') \geq \ten(\z,\Row,\Col)$.
This shows that $\me(\z) \leq \sup\{\ten(\z,\Row,\Col) : (\Row,\Col) \in \widetilde{\mathcal{A}}_{\rm en}\}$, and the other inequality is trivial. 
\end{proof}

\begin{proof}[Proof of Theorem \ref{enhancedUB}]
The zero-set $\z$ contains at most $s$ rows and $s$ columns that are longer than $s$, so the number of distinct non-zero row counts of $\z$ is at most $2s$, and the same upper bound holds for the column counts of $\z$. 
For every $(\Row,\Col) \in \widetilde{\mathcal{A}}_{\rm en}$, $\Row \subseteq \z$ and $\Col \subseteq \z$ and so the number of distinct row counts of $\Row$, including zero, is at most $2s + 1$, 
and again the same upper bound holds for the number of distinct nonzero column counts of $\Col$. 
By Corollary \ref{partitiontimebound}, $A_{2s + 2s + 1} = \Z_+^2$. Thus, for every $(\Row,\Col) \in \widetilde{\mathcal{A}}_{\rm en}$,  
$\ten(\z,\Row,\Col) \leq 2s + 2s + 1 = 4s + 1$
and therefore by Lemma \ref{containment}, $\me(\z) \leq 4s + 1$. 
The upper bound on $\Mth$ follows from Theorem \ref{thin-enhanced}.
The two statements on the suprema over $\z$ now follow from Proposition \ref{rectmn}. 
\end{proof}

%%%%%%%%%%%%%%%%%%%%%%%%%%%%%%%%%%%%%%%%%%%%%%%%%%%%%%%%%%%%%%%%%%%%%%%%%%%%%%%%%%%%%%%%%%%%%%%%%%%%%%%%%%%%%%%%%%%%%%%%

%%% Lower Bound on \M(\z) %%%

\section{Lower bounds} \label{lowerbound}

In this section we prove Theorem \ref{generalLB}. 

For $a,b,k \in \N$, define the $(a,b)$-staircase of size $k$ as the following union of rectangles:
\begin{align*}
S_{a,b,k} = \bigcup_{j=1}^k R_{bj,a(k+1-j)} = \left\{(i,j) : \left\lfloor \frac{i}{b} \right\rfloor + \left\lfloor \frac{j}{a} \right\rfloor \leq k-1 \right\}.
\end{align*}
This Young diagram has row counts of length $b, 2b, \cdots, kb,$ each with multiplicity $a$, and column counts $a, 2a, \cdots, ka$ each with multiplicity $b$. 

\begin{lemma} \label{staircasesumlemma}
Fix $a,b \in \N$. For any $k \geq 1$, 
$S_{a,b,k}$ lies strictly below the line $ax+by = (k+1)ab$ and contains the set of sites $\{(i,j) : ai+bj < kab\}$
Furthermore, if $k_1,k_2 \in \N$, 
\begin{equation} \label{staircasesum}
S_{a,b,k_1} \boxplus S_{a,b,k_2} = S_{a,b,k_1 + k_2}.
\end{equation}    
\end{lemma}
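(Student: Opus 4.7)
The plan is to work throughout with the floor-function description
$S_{a,b,k}=\{(i,j):\lfloor i/b\rfloor+\lfloor j/a\rfloor\le k-1\}$
provided in the statement; once this is in hand, all three assertions reduce to elementary manipulations of floors.

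For the two geometric claims I would argue directly. If $(i,j)\in S_{a,b,k}$ then the inequalities $i/b<\lfloor i/b\rfloor+1$ and $j/a<\lfloor j/a\rfloor+1$ give $i/b+j/a<k+1$, so $ai+bj<(k+1)ab$, which puts $(i,j)$ strictly below the stated line. Conversely, if $ai+bj<kab$ then $i/b+j/a<k$, and since $\lfloor i/b\rfloor\le i/b$ and $\lfloor j/a\rfloor\le j/a$ the integer $\lfloor i/b\rfloor+\lfloor j/a\rfloor$ is at most $k-1$, so $(i,j)\in S_{a,b,k}$.

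For the identity \eqref{staircasesum} I would use the characterization that $(i,j)\in X\boxplus Y$ iff for every decomposition $(i,j)=(i_1,j_1)+(i_2,j_2)$ with both summands in $\Z_+^2$ we have $(i_1,j_1)\in X$ or $(i_2,j_2)\in Y$. The inclusion $S_{a,b,k_1+k_2}\subseteq S_{a,b,k_1}\boxplus S_{a,b,k_2}$ then follows from the superadditivity of the floor, $\lfloor i_1/b\rfloor+\lfloor i_2/b\rfloor\le\lfloor(i_1+i_2)/b\rfloor$ and the analogous inequality in the second coordinate: the two "levels" $\lfloor i_\ell/b\rfloor+\lfloor j_\ell/a\rfloor$ sum to at most $\lfloor i/b\rfloor+\lfloor j/a\rfloor\le k_1+k_2-1$, so at least one of them is $\le k_1-1$ or the other is $\le k_2-1$, and either possibility suffices.

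For the reverse inclusion I would construct an explicit bad decomposition. Write $p=\lfloor i/b\rfloor$, $q=\lfloor j/a\rfloor$, and assume $p+q\ge k_1+k_2$. Choose nonnegative integers $p_1,q_1$ with $p_1\le p$, $q_1\le q$, and $p_1+q_1=k_1$ (e.g.\ $p_1=\min\{k_1,p\}$ and $q_1=k_1-p_1$; the choice is legitimate because $p+q\ge k_1$), and set $(i_1,j_1)=(p_1b,q_1a)$ and $(i_2,j_2)=(i-p_1b,j-q_1a)\in\Z_+^2$. Then $\lfloor i_1/b\rfloor+\lfloor j_1/a\rfloor=k_1$ while $\lfloor i_2/b\rfloor+\lfloor j_2/a\rfloor=(p-p_1)+(q-q_1)\ge k_2$, so neither piece lies in its respective staircase, witnessing $(i,j)\notin S_{a,b,k_1}\boxplus S_{a,b,k_2}$. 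The only genuine subtlety is the existence of this splitting, which requires a trivial case split on whether $p\ge k_1$; everything else is a one-line verification.
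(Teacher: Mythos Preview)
Your argument is correct and is exactly the kind of direct verification the paper has in mind; the paper's own proof consists of the single sentence ``This is a straightforward verification,'' and your write-up supplies precisely those routine floor-function manipulations.
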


\begin{proof}
This is a straightforward verification. 
\end{proof}

The next lemma provides a general method for establishing lower bounds; see Figure \ref{FigSlopeLine} for an illustration. 

\begin{lemma} \label{ratslope}
Fix a zero-set $\z$ and two positive integers $a$ and $b$. Let $k$ be the smallest positive integer such that $ai + bj < kab$ for all $(i,j) \in \z.$  Let $(i_0,j_0)\in \z$ be a point that satisfies $(k-1)ab \leq ai_0+bj_0$. Then
\begin{align*}	
\me(\z) \geq \min\left(\left\lceil \frac{i_0+1}{b}\right \rceil, \left\lceil \frac{j_0+1}{a}\right \rceil \right).
\end{align*}
\end{lemma}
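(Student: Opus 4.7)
The plan is to exhibit a pair of enhancements, built from staircases, that span for $\z$ but force the enhanced dynamics to take at least $\min(\lceil (i_0+1)/b\rceil,\lceil (j_0+1)/a\rceil)$ time steps.

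Set $\ell_1 = \lceil (i_0+1)/b\rceil$ and $\ell_2 = \lceil (j_0+1)/a\rceil$, and assume without loss of generality that $\ell_1 \le \ell_2$. First I would check the inequality $k \le \ell_1 + \ell_2$: since $i_0 < \ell_1 b$ and $j_0 < \ell_2 a$, we get $a i_0 + b j_0 < (\ell_1 + \ell_2) a b$, so the hypothesis $(k-1)ab \le a i_0 + b j_0$ yields $k - 1 < \ell_1 + \ell_2$. Then I take $\cR = S_{a,b,\ell_2}$ and $\cC = S_{a,b,\ell_1}$. By Lemma~\ref{staircasesumlemma}\eqref{staircasesum}, $\cR \boxplus \cC = S_{a,b,\ell_1 + \ell_2} \supseteq S_{a,b,k} \supseteq \z$, and by Lemma~\ref{gridsumspan} the pair $(\cR,\cC)$ is in $\Aen$.

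The substantive task is to prove $\ten(\z,\cR,\cC) \ge \ell_1$. Because $\cR$ and $\cC$ are both constant on the rectangular cells $I_i \times J_j$ of the staircase partition (with $i \in \{0,\dots,\ell_1\}$ and $j \in \{0,\dots,\ell_2\}$), Lemma~\ref{enhgrowthprops}\eqref{enhsametime} lets me descend to a finite cellular dynamics on these $(\ell_1+1)(\ell_2+1)$ cells. My plan is to track which cells are occupied by induction on time $t$ and to exhibit a distinguished cell that cannot be occupied until $t \ge \ell_1$.

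The main obstacle is that Lemma~\ref{partition} supplies only the inclusion $\bigcup_{i+j<t} I_i \times J_j \subseteq A_t$, which is a lower bound on $A_t$ and yields Corollary~\ref{partitiontimebound}'s upper bound $\ten \le \ell_1 + \ell_2 + 1$ --- the wrong direction for the present purpose. The inductive argument must therefore furnish a matching upper bound on $A_t$: I would show that as long as $t < \ell_1$, for at least one partition cell the effective row count plus $r_j$ and effective column count plus $c_i$ still lie inside $\z$, so that this cell remains blocked. The candidate cell is one at "diagonal depth" $\ell_1 - 1$, for example the cell indexed by $(\ell_1 - 1, \ell_2 - 1)$; the comparison with the witness point $(i_0, j_0)$ near the boundary of $\z$ --- specifically that the enhanced coordinates at this cell remain below the staircase level $k$ separating $\z$ from its exterior --- provides the blocking. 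Once this inductive step is in place, the existence of an unoccupied cell at time $\ell_1 - 1$ gives $\ten(\z,\cR,\cC) \ge \ell_1$, which combined with $\me(\z) \ge \ten(\z,\cR,\cC)$ completes the proof.
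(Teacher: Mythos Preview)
Your construction and spanning verification are correct and match the paper: the staircase enhancements $S_{a,b,\ell_1}$ and $S_{a,b,\ell_2}$ with $\ell_1+\ell_2\ge k$ span for $\z$ by Lemmas~\ref{staircasesumlemma} and~\ref{gridsumspan}. (The paper assigns $\cR=S_{a,b,\ell_1}$ and $\cC=S_{a,b,\ell_2}$, the reverse of yours; either works, but the paper's choice makes the subsequent count tidier.)

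The gap is in your blocking step. To keep a cell unoccupied under $\z$ you must show its enhanced $(\text{row},\text{col})$ pair lies \emph{in} $\z$, and your stated mechanism---that the pair ``remain[s] below the staircase level $k$''---only places it in $S_{a,b,k}$. Since $\z\subseteq S_{a,b,k}$, membership in the staircase does not force membership in $\z$; indeed the inclusion $\z\subseteq S_{a,b,k}$ yields $\ten(\z,\cdot)\le\ten(S_{a,b,k},\cdot)$, the wrong direction for a lower bound. Your inductive outline also never supplies the needed \emph{upper} bound on $A_t$, without which you cannot control the row/column counts at the candidate cell.

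The paper sidesteps all of this with one observation you nearly make when you mention ``comparison with the witness point $(i_0,j_0)$'': since $(i_0,j_0)\in\z$ and $\z$ is a Young diagram, the rectangle $R_{m,n}$ with $m=i_0+1$, $n=j_0+1$ satisfies $R_{m,n}\subseteq\z$, and hence $\ten(\z,\cR,\cC)\ge\ten(R_{m,n},\cR,\cC)$. Under the rectangular zero-set the dynamics is explicit---with the paper's assignment, exactly $a$ new rows and $b$ new columns become covered at each step $t\le\ell=\min(\ell_1,\ell_2)$---so $\Z_+^2$ is not yet occupied at time $\ell$, giving $\ten(R_{m,n},\cR,\cC)\ge\ell$. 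No cell-tracking induction on the original $\z$ is needed.
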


\begin{proof}
By Lemma \ref{staircasesumlemma}, $\z \subseteq S_{a,b,k}$. 
Assume $k_1,k_2 \in \N$ such that $k_1 + k_2 \geq k$. 
By \eqref{staircasesum} and Lemma \ref{gridsumspan}, 
the row and column enhancements $\Row = S_{a,b,k_1}$ and $\Col = S_{a,b,k_2}$ span for $\z$. 
 
Let $m= i_0 +1$ and $n=j_0+1$. Then $R_{m,n} \subseteq \z$. Moreover, $k_1 = \lceil{\frac{m}{b}}\rceil$ and $k_2 = \lceil{\frac{n}{a}}\rceil$ satisfy $k_1+k_2 \geq k$. 

In this paragraph we consider the dynamics with zero-set $R_{m,n}$. As $R_{m,n} \subseteq \z$, $\Row$ and $\Col$ span for $R_{m,n}$.  
Let $\ell = \min (k_1,k_2)$. At any time $t \leq \ell$, exactly $a$ new rows and $b$ new columns become covered. Therefore, $\Z_+^2$ does not become occupied by time $\ell$.

By monotonicity, $\ten(\z,\Row,\Col) \geq \ten(R_{m,n},\Row,\Col)$, and therefore $\me(\z) \geq \ell$. 
\end{proof}

%%%%%%%%%%%%%%

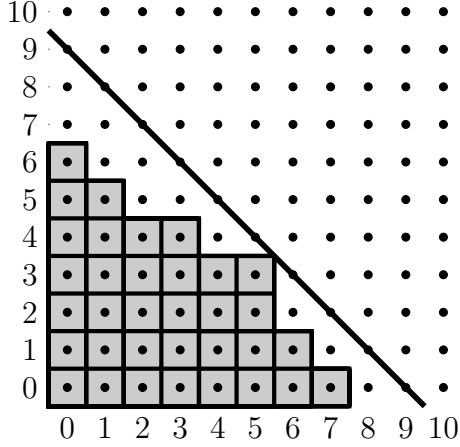
\begin{figure}[!t]
\centering
\captionsetup{justification=centering,margin=2cm}

\begin{tikzpicture}[scale=0.50]
   	% Draw axes
%	\draw[thick,-] (0,0) -- (11,0) node[anchor=north west] {$u$};
%	\draw[thick,-] (0,0) -- (0,11) node[anchor=south east] {$v$};
	%label x axis
    	\draw (0.5 cm, 0pt) node (0.5 cm,0pt) [anchor=north] {$0$};
	\draw (1.5 cm, 0pt) node (1.5 cm,0pt) [anchor=north] {$1$};
    	\draw (2.5 cm, 0pt) node (2.5 cm,0pt) [anchor=north] {$2$};
    	\draw (3.5 cm, 0pt) node (3.5 cm,0pt) [anchor=north] {$3$};
	\draw (4.5 cm, 0pt) node (4.5 cm,0pt) [anchor=north] {$4$};
	\draw (5.5 cm, 0pt) node (5.5 cm,0pt) [anchor=north] {$5$};
	\draw (6.5 cm, 0pt) node (6.5 cm,0pt) [anchor=north] {$6$};
	\draw (7.5 cm, 0pt) node (7.5 cm,0pt) [anchor=north] {$7$};
	\draw (8.5 cm, 0pt) node (8.5 cm,0pt) [anchor=north] {$8$};
	\draw (9.5 cm, 0pt) node (9.5 cm,0pt) [anchor=north] {$9$};
	\draw (10.5 cm, 0pt) node (10.5 cm,0pt) [anchor=north] {$10$};
	%label y axis
	\draw (0pt,0.5 cm) -- (0pt,0.5 cm) node [anchor=east] {$0$};
	\draw (0pt,1.5 cm) -- (0pt,1.5 cm) node [anchor=east] {$1$};
	\draw (0pt,2.5 cm) -- (0pt,2.5 cm) node [anchor=east] {$2$};
	\draw (0pt,3.5 cm) -- (0pt,3.5 cm) node [anchor=east] {$3$};
	\draw (0pt,4.5 cm) -- (0pt,4.5 cm) node [anchor=east] {$4$};
	\draw (0pt,5.5 cm) -- (0pt,5.5 cm) node [anchor=east] {$5$};
	\draw (0pt,6.5 cm) -- (0pt,6.5 cm) node [anchor=east] {$6$};
	\draw (0pt,7.5 cm) -- (0pt,7.5 cm) node [anchor=east] {$7$};
	\draw (0pt,8.5 cm) -- (0pt,8.5 cm) node [anchor=east] {$8$};
	\draw (0pt,9.5 cm) -- (0pt,9.5 cm) node [anchor=east] {$9$};
	\draw (0pt,10.5 cm) -- (0pt,10.5 cm) node [anchor=east] {$10$};
	%Draw zero set
	\draw [ultra thick, draw=black, fill=black!20!white] (0,0) grid (6,4) rectangle (0,0);
	\draw [ultra thick, draw=black, fill=black!20!white] (0,0) grid (8,1) rectangle (0,0);
	\draw [ultra thick, draw=black, fill=black!20!white] (0,0) grid (7,2) rectangle (0,0);
	\draw [ultra thick, draw=black, fill=black!20!white] (0,0) grid (4,5) rectangle (0,0);
	\draw [ultra thick, draw=black, fill=black!20!white] (0,0) grid (2,6) rectangle (0,0);
	\draw [ultra thick, draw=black, fill=black!20!white] (0,0) grid (1,7) rectangle (0,0);
	%Draw line
	\draw[line width = 0.075cm] (0,10)--(10,0);

	%Draw dots
	\foreach \x in {0,1,2,3,4,5,6,7,8,9,10}
		\foreach \y in {0,1,2,3,4,5,6,7,8,9,10}
			\filldraw[black] (\y + 0.5,\x + 0.5) circle (3pt);

\end{tikzpicture}

\caption{Illustration of Lemma \ref{ratslope}, with $a=b=1$, $k = 9$, and $(i_0,j_0) = (5,3)$.}  \label{FigSlopeLine}
%\vspace{-0.5cm}
\end{figure}
%%%%%%%%%%%%%%

\begin{proof}[Proof of Theorem \ref{generalLB}]
Let $k_r$ (resp., $k_c$) denote the largest number of rows (resp., columns) in $\z$ that are all of the same length and are of length at least $s$.  
If $k_r < s^{1/2}$, then there are at least $s^{1/2}$ rows of different lengths in $\z$. Let $(\Row,\Col)$ be enhancements such that $\Row = \z$ and $\Col = \emptyset$. 
Then rows with different lengths are spanned at different times, which gives a spanning time of at least $s^{1/2}$. 
Similarly, if $k_c < s^{1/2}$, $\Row = \emptyset$ and $\Col = \z$ induce a spanning time of at least $s^{1/2}$. 

Otherwise, $k_r \geq s^{1/2}$ and $k_c \geq s^{1/2}$. Let $d_c$ be the number of columns with length strictly larger than the common length of the $k_c$ columns.  
Analogously let $d_r$ be the number of rows with length strictly larger than that of the $k_r$ rows.
Define $\z' = \z^{\leftarrow d_c\downarrow d_r}$. We use Lemma \ref{ratslope} with $a=b=1$ and zero-set $\z'$.  
There exists $(i_0,j_0)\in \z'$ such that $i_0+1\geq k_c$, $j_0+1\geq k_r$ and the line with slope $-1$ through $(i_0+1,j_0+1)$ does not intersect $\z'$.  By Lemma \ref{ratslope} $\me(\z') \geq \min\{i_0+1,j_0+1\} \geq \min(k_c,k_r)\geq s^{1/2}$. 
By Lemma \ref{shifty}, $\me(\z') \leq \me(\z)$, so $\me(\z) \geq s^{1/2}.$ This proves the inequality for $\Men$. 

The remainder of the theorem follows from Lemma \ref{enhancedineq} and the observation that if $R_{s,s} \subseteq \z$, then $R_{s-1,s-1} \subseteq \z^{\swarrow 1}$.
\end{proof}

%%%%%%%%%%%%%%%%%%%%%%%%%%%%%%%%%%%%%%%%%%%%%%%%%%%%%%%%%%%%%%%%%%%%%%%%%%%%%%%%%%%%%%%%%%%%%%%%%%%%%%%%%%%%%%%%%%%%%%%%

%%% Open Questions %%%

\section{Open questions}\label{openquestions}

\begin{enumerate}

\item Is there an explicit formula for $\M(\z)$ or for $\Men(\z)$ for threshold growth $\z =\{(u,v): u+v \leq \theta-1\}$? We know that $\theta + 1 \leq \M(\z) \leq 2\theta^2 + 5$. 

\item Let $\alpha_c^{\text{\rm min}}$ be the infimum of the the set of exponents $\alpha>0$ for which 
$$
\inf_{\z} \frac{\Men(\z)}{s^\alpha}=0.
$$
What is the value of $\alpha_c^{\text{\rm min}}$? We suspect that the answer is $1/2$ 
and know that $1/2 \leq \alpha_c^{\text{\rm min}}\leq \alpha_c^{\text{\rm max}} = 1$ (from Theorems \ref{enhancedUB} and \ref{generalLB}). 
Here, $\alpha_c^{\text{\rm max}}$ is defined to be the supremum of the exponents $\alpha>0$ for which 
$$
\sup_{\z} \frac{\Men(\z)}{s^\alpha}=\infty. 
$$

\item Is there a finite exponent $\alpha>0$ so that 
$$
\sup_{\z} \frac{\M(\z)}{s^\alpha}<\infty?
$$

\item Call a set $A\subseteq \Z_+^2$  {\em $k$-thin} if, for every $x\in A$, either $\row(x,A)\le k$ or $\col(x,A)\le k$. 
Observe that $1$-thin sets are exactly the thin ones. Let $\Mth^k(\z)$ be the maximal finite spanning time of a $k$-thin set. Do there exist finite constants $c_k,C_k>0$ such that 
$$ c_k\,\Mth(\z)\le \Mth^k(\z)\le C_k\,\Mth(\z)?$$

\end{enumerate}

%%%%%%%%%%%%%%%%%%%%%%%%%%%%%%%%%%%%%%%%%%%%%%%%%%%%%%%%%%%%%%%%%%%%%%%%%%%%%%%%%%%%%%%%%%%%%%%%%%%%%%%%%%%%%%%%%%%%%%%%

%%% Acknowledgements %%%

\section*{Acknowledgements} \label{acknowledgements}

Janko Gravner was partially supported by the NSF grant DMS-1513340, Simons Foundation Award \#281309, and the Republic of Slovenia's Ministry of Science program P1-285.
J.E. Paguyo was partially supported by the same NSF grant and the UC Davis REU program. 

%%%%%%%%%%%%%%%%%%%%%%%%%%%%%%%%%%%%%%%%%%%%%%%%%%%%%%%%%%%%%%%%%%%%%%%%%%%%%%%%%%%%%%%%%%%%%%%%%%%%%%%%%%%%%%%%%%%%%%%%

%%% References %%%

\end{document}